\newtheorem{theorem}{Theorem}[section]
\newtheorem{proposition}[theorem]{Proposition}
\newtheorem{definition}[theorem]{Definition}
\newtheorem{lemma}[theorem]{Lemma}
\newtheorem{corollary}[theorem]{Corollary}
\newtheorem{remark}[theorem]{Remark}
\numberwithin{equation}{section}
\numberwithin{theorem}{section}
\newcommand{\bel}[2]{\begin{equation} \label{#1} \begin{split} #2
\end{split} \end{equation}} 
\newcommand{\mc}[1]{{\mathcal #1}}
\newcommand{\mb}[1]{{\mathbf #1}}
\newcommand{\bb}[1]{{\mathbb #1}}
\newcommand{\st}{\,:\:}
\newcommand{\eps}{\varepsilon}
\newcommand{\HH}{\mathfrak{H}}
\newcommand{\vol}{\operatorname{Vol}}
\newcommand{\sjl}{{\bm{(}}}
\newcommand{\sjd}{{\bm{)}}}
\author[A.Galkin]{Artem Galkin}
\address{Faculty of Mathematics, National Research University Higher School of Economics, 6 Usacheva St., 119048 Moscow, Russia.}
\email{agalkin@hse.ru}
\author[M.Mariani]{Mauro Mariani}
\address{Faculty of Mathematics, National Research University Higher School of Economics, 6 Usacheva St., 119048 Moscow, Russia.}
\email{mmariani@hse.ru}
\title[Random homology of diffusion processes]{Asymptotic properties of random homology induced by diffusion processes}
\begin{document}
\keywords{Random homology, Asymptotic of diffusion processes, Albanese map, Gallavotti-Cohen symmetry, Weighted harmonic maps}
\subjclass[2010]{58J65,	60H10,60F10,53C43}

\begin{abstract}
	We investigate the asymptotic behavior, in the long time limit, of the random homology associated to realizations of stochastic diffusion processes on a compact Riemannian manifold. In particular a rigidity result is established: if the rate is quadratic, then the manifold is a locally trivial fiber bundle over a flat torus, with fibers being minimal in a weighted sense (that is, regarding the manifold as a metric measured space, with the invariant probability being the weight measure). Surprisingly, this entails that at least for some classes of manifolds, the homology of non-reversible processes relaxes to equilibrium \emph{slower} than its reversible counterpart (as opposed to the respective empirical measure, which relaxes faster).
\end{abstract}

\maketitle

\section{Introduction}
\label{s:intro}
The asymptotic behavior of invariants of random curves on manifolds is a classical subject, a famous example being the Cauchy asymptotic of the winding number of a Brownian motion in $\bb R^2$ due to Spitzer, see  \cite{spitzer1958some}.
Manabe, Lyons \& McKean, and Pitman \& Yor further investigated winding numbers in a well-known series of papers throughout the '80s, \cite{manabe1982stochastic,lyons1984winding,pitman1984asymptotic,pitman1986asymptotic,pitman1989further}, with some large deviations bounds provided in \cite{avellaneda1985large}. It is not possible to mention all the literature starting from the '90s, even when considering just Brownian winding numbers. A geometrical approach to the problem has been developed by the Japanese school of stochastic calculus, an explanatory reference being Watanabe's \cite{watanabe2000asymptotic}, where the asymptotic behavior of windings around points and windings around topological "holes" is investigated on Riemannian surfaces.

\subsubsection*{Results and motivations}

This paper is mostly motivated by this approach, in particular it studies the qualitative behavior of the fluctuations of the random homology associated to diffusion processes on compact manifolds. We focus on the long time limit $T\to \infty$, of the random homology $h_T$ associated to paths of general diffusion processes. While precise definitions are given in Section~\ref{s:preliminaries}, this should be thought ash a finite-dimensional random variable that describes the number of windings per unit time of a stochastic continuous curve $(X_t)_{t \le T}$. It is easy to show that in the sense of large deviations, as $T\to \infty$
\bel{e:ld}{
   \bb P(h_T \simeq h) \simeq \exp\left(-T G(h) \right)
}
where $G(h) \in [0,\infty]$ is a positive \emph{rate function} defined on the space of DeRham homologies $h\in H_1(M;\bb R)$, see Section~\ref{ss:rate}. For a fixed compact Riemannian manifold $M$, the law of the diffusion process $X_t$ is identified by the Riemannian metric $g$ and a tangent vector field $b$ on $M$. Thus $G(\cdot)$ is itself determined by $g$ and $b$ (while it is independent of the initial condition $X_0$), and our goal is to establish for which triples $(M,g,b)$, the rate $G$ features some qualitative properties. To fix the ideas, consider the SDE on a flat $d$-dimensional torus
\bel{e:flat_sde}{
   \dot X= \bar h+\dot B
}
where $\bar h\in \bb R^d$ is a constant and $B$ is a standard (Gaussian) Brownian motion. Then the vector $h_T\coloneq (\tilde{X}_T-\tilde{X}_0)/T$ describes exactly the time-normalized number of times the process winded around holes of the torus, where $\tilde{X}$ is the lift of $X$ to $\bb R^d$ (see Remark~\ref{r:ldh} about the fact that $(X_t)$ is not a closed curve). In this case $h_T$ is a Gaussian random variable with expected value $\bar h$, and it is easily seen that as $T\to \infty$ it satisfies a large deviations principle with a \emph{quadratic} rate $G(h)=\tfrac{1}{2} \|h-\bar h\|^2$ for a suitable norm $\|\cdot\|$. In general, on any manifold, as $T\to \infty$, $h_T$ converges to some deterministic $\bar h\in H_1(M;\bb R)$, and $G(h)$ describes the asymptotic of $h_T$ and in particular it quantifies how unlikely it is to fluctuate away from $\bar h$ (thus particular $G(h)=0$ only for $h=\bar h$).

Our main achievement is a rigidity result stating that if $G$ is quadratic, then the manifold $M$ is not too different from a flat torus with a random dynamic of type \eqref{e:flat_sde}. While the precise statement of Theorem~\ref{t:asympnr} requires some preliminaries, it basically states that a triple $(M,g,b)$ featuring a quadratic rate functional $G(\cdot)$ for the associated random homology $h_T$, is necessarily a locally trivial fiber bundle over a flat torus $\bb T^{b_1}$ (where $b_1$ is the first Betti number), moreover fibers are minimal in a suitable sense (which corresponds to minimality on the metric measured space $(M,g,m)$ where $m$ is in the invariant measure of the process, see Section~\ref{ss:fib}), and some weak reversibility property should finally hold. While there are trivial examples of such manifolds and diffusion processes (e.g.\ \eqref{e:flat_sde} or a product of \eqref{e:flat_sde} and a simply connected manifolds with independent random dynamics), also nontrivial examples exists. In particular, we deduce that a Brownian motion on formal manifolds, see \cite{Kotschick2001}, features such a quadratic rate; or more in general one may build weighted formal manifolds with such a   property.

Our result has a nontrivial consequence. As discussed in detail in Remark~\ref{r:nonrevslower}, one usually expects that non-reversible Markov processes present a faster convergence to equilibrium (and thus smaller fluctuations and larger deviations rates), when compared to their reversible counterpart (that is, the process associated to the symmetric part of their generator). This folklore idea can be made rigorous in various contexts, for instance:  through spectral analysis on finite graphs; or, exactly in the aforementioned sense of large deviations, for the occupation measure of diffusion processes, see \cite{rey2015irreversible,rey2016improving}. This is a rather significant phenomenon in applications, since it may be used to speed-up the convergence rate of MCMC algorithms. The actual rigorous result is that the large deviations rate of observables that are functions of the occupation measure of a Markov process, is larger than the rate of the same observables associated to their reversible part, thus yielding smaller fluctuations and faster convergence in the non-reversible case.

Random homologies provide a counterexample to this phenomenon. The occupation measure of a diffusion process can be recovered from the action of the stochastic current, see \eqref{e:pit} on exact forms; however the random homology associated to a process is obtained by the action of the same random current  on \emph{closed} forms. Surprisingly, as soon as one extends in such a minimal way (from exact to closed) the set of observables considered, the previously described inequality on deviations rates is broken, and even more, when considering the quotient closed/excact, the \emph{opposite} inequality actually holds. In a suitable sense reversible processes (or more in general homologically reversible processes, see Definition~\ref{d:quasireversible}) are the ones presenting the fastest convergence rates. This is a straightforward consequence of Theorem~\ref{t:asympnr}, and while not in contradiction with \cite{rey2015irreversible}, it is certainly unexpected, see Remark~\ref{r:nonrevslower}.

A second set of results, see Proposition~\ref{p:bound}, computes the expansion of $G(h)$ around its minimizer $\bar h$, that is we compute $\lim_{\eps \to 0} \eps^{-2} G(\bar h+\eps h)$, a global explicit bound $G(h)\le Q(h)$, and we establish a Gallavotti-Cohen symmetry for $G$ in some cases. Informally speaking, the large deviations rate functions of the pair measure-current are the out-of-equilibrium counterpart of thermodynamic potentials in the context of equilibrium Statistical Mechanics and Dynamical Systems, see e.g.\ \cite{bertini2015macroscopic} and references there in. In this context, the significance of Gallavotti-Cohen symmetries for the rate function of the pair measure-current for non-reversible processes has been a subject of interest, likely after \cite{lebowitz1999gallavotti}. More recently, such symmetries have been noticed (to hold, or often not to hold) for some observables informally related to discrete homologies,  \cites{faggionato2011gallavotti,faggionato2017random}.

In Section~\ref{ss:open} we further discuss some deeper motivations that require some additional preliminaries to be properly discussed.

\subsubsection*{Plan of the paper}
The paper is organized as follows. In Section~\ref{s:preliminaries} we introduce the main notation and recall some results concerning random currents. In Section~\ref{s:main} we state our main results, namely Proposition~\ref{p:bound} and Theorem~\ref{t:asympnr}, and discuss some open problems. In Section~\ref{s:tools} we introduce some mathematical tools that may have an independent interest, in particular weighted Albanese maps, the relation between weighted minimality and weighted harmonicity and the lift of Markov generators. In Section~\ref{s:proofquad}-\ref{s:proofag} we prove the main results together with some additional statements which may have a broader interest.

\subsubsection*{Acknowledgement} We are thankful to Domenico Fiorenza for pointing out Corollary~\ref{c:torelli}.

\section{Preliminaries}
\label{s:preliminaries}

\subsection{Notation}
\label{ss:notation}
Let $(M, g)$ be a smooth closed compact, connected Riemannian manifold. $\mc D^k \equiv \mc D^k(M)$ denotes the space of smooth $k$-forms on $M$, $d$ and $d^\ast$ the differential and codifferential. In particular we mostly consider $\mc D^0$ (smooth functions) and $\mc D^1$ (smooth $1$-forms). $\mc P(M)$ denotes the space of Borel probability measures on $M$ endowed with the standard narrow topology, and $\mc J(M)$ the space of $1$-currents regarded as the dual of $\mc D^1$.  $\langle \cdot, \cdot \rangle$ stands for the pairing between vector fields and $1$-forms on $M$. In particular we understand, as standard in the probabilistic notation, $\langle b,df\rangle$ as the action $bf$ of the vector field $b$ on $f\in \mc D^0$. $\mu(f)$ denotes the integral of $f$ w.r.t.\ $\mu$, and $j(\omega)$ the action of a current $j$ on $\omega$.

Since we have fixed a Riemannian tensor $g$, we denote without further notice $|\cdot|$ the associated norms both on tangent and cotangent spaces. For instance, with this notation, $|\omega|^2(x)= \langle g^{-1} \omega, \omega\rangle(x)$. With an abuse of notation, $\|\cdot\|_\mu$ denotes the induced $L^2(\mu)$-norms both on $\mu$-square integrable $1$-forms and currents, e.g.\  $\|\omega\|_{\mu}^2\coloneq \mu(| \omega|^2)$.

$H^1(M;\bb R)$ and $H_1(M;\bb R)$ denote the space of De Rham cohomology and its dual, the real homology group of the manifold $M$. $[c]$ stands for the space of closed $\omega \in \mc{D}^1$ in cohomology class $c$, and $\langle h,c\rangle$ denotes the homology-cohomology duality\footnote{As various scalar products are used in this paper, we use a notation which is more common in the probabilistic literature but somehow unusual in differential geometry. Angled brackets $\langle \cdot , \cdot \rangle$ are used for dualities that are independent of the metric $g$, while $(\cdot,\cdot)$, $(\cdot,\cdot)_r$, $\sjl \cdot,\cdot \sjd$ denote scalar products that do depend $g$.}.

Recall that a Riemannian metric is defined on $M$, let $\Delta$ be the associated Laplace-Beltrami operator and $b$ a smooth vector field on $M$. Let $(X_t)_{t \geq 0}$ be the $M$-valued Feller process with  generator $L$ defined on smooth functions as
\bel{e:generator}{
	Lf= \tfrac 12  \Delta f + \langle b,df \rangle
}
There exists a unique probability measure $m$ such that $m(Lf) = 0$ for any $f \in \mc D^0$, referred to as the \emph{invariant measure}. With a little abuse of notation, hereafter we still denote by $L$ the closure of $L$ both in $C(M)$ and $L^2(m)$. Finally, for $\mu\in \mc P(M)$, we denote $j_\mu\in \mc J(M)$ the \emph{typical current} defined by
\bel{e:jmu_def}{
j_\mu(\omega)\coloneq\mu(\tfrac 12 d^\ast \omega + \langle b,\omega\rangle)
}

\subsection{Rate function for random homologies}
\label{ss:rate}
In this section we define the rate function $G$ for random homologies. As it is easily derived from known results, we define it rigorously but concisely, and refer to Section~\ref{s:tools} for further details. Recall that the operator $L$ defined in \eqref{e:generator} generates a Markov process. The empirical measure $\pi_T \in \mc P(M)$ and the empirical current $J_T \in \mc J(M)$ are then defined pathwise as ($\circ$ denotes the Stratonovich integral)
\bel{e:pit}{
	\pi_T(f)\coloneq \tfrac{1}T \int_0^T f(X_s)\,ds, \qquad f \in \mc D^0
	\\
	J_T(\omega) = \tfrac 1T \int_0^T \omega(X_s) \circ dX_s, \qquad \omega \in \mc D^1
}
To be precise, \eqref{e:pit} does not immediately identify a random element $J_T\in \mc J(M)$, since the definition holds only a.e.\ for each $\omega \in \mc D^1$, but $\mc D^1$ is uncountable. This technicality can be overcome in various equivalent ways, either defining $J_T$ for \emph{every} $X \in C^\alpha(M)$ as a geometric rough path, see e.g.\ the seminal \cite{lyons1998differential}, or using a classical result by Mitoma \cite[page~358]{carmona1986introduction}. In any case \eqref{e:pit} provides a well-posed definition of random current. Sharper results of well-posedness $J_T$ in stronger space are possible, see for instance \cites{flandoli2005stochastic,flandoli2009regularity}.

Large deviations results, in the limit $T\to \infty$, can be established for the pair $(\pi_T,J_T)$  with standard tools. In the last decades, sharper techniques strengthened  the topology in which large deviations hold, in the same way as rough paths techniques strengthened the topology for the well-posedness of $J_T$, see \cites{kuwada2003sample,kusuoka2010large} with the full results for generic diffusions in \cite{kuwada2006large,galkin2024large}. These results motivate the following definitions.

\begin{definition}[Large Deviations Rates]
	\label{d:h}
	$\HH$ denotes the space of pairs $(\mu,j) \in \mc P(M)\times \mc J(M)$ such that
	\begin{itemize}
		\item $j$ is a closed current, namely $j(df)=0$ for all $f\in \mc D^0$.
		\item $\mu$ has finite Fisher information w.r.t.\ the invariant measure $m$, namely $\mu$ is absolutely continuous $\mu = \varrho m$ and $\sqrt{\varrho} \in W^{1,2}(M,m)$.
	\end{itemize}

	The \emph{large deviations rate of the pair empirical measure-current} is defined as
	\bel{e:rate}{
		&  I \colon \mc P(M) \times \mc J(M) \to [0,\infty]
		\\
		&	I(\mu,j)\coloneq
		\begin{cases}
			\tfrac 12 \|j-j_\mu\|^2_\mu & \text{if $(\mu,j) \in \HH$}
			\\
			+\infty                     & \text{otherwise}
		\end{cases}
	}
	The \emph{large deviations rate $G$ of the random homology} associated to the generator $L$ in \eqref{e:generator} is defined as
	\bel{e:rate2}{
		& G\colon H_1(M;\bb R) \to [0,\infty]
		\\
		& G(h)\coloneq \inf\big\{   I(\mu,j)\st  j(\omega)= (c,h),\,\forall \, \omega \in [c]
		\big\}
	}
\end{definition}
As discussed before, the law of $(\pi_T,J_T)$ satisfies a large deviations principle with speed $T$ and rate $I$, see the aforementioned \cites{galkin2024large,kuwada2003sample,kusuoka2010large} for details. However, since paths of diffusion processes are not closed, the restriction of $J_T$ to closed $1$-forms does not identify a random homology. Fix however any linear isomorphism $H^1(M, \bb R) \ni c \mapsto \xi_c \in \mc D^1$, associating to each cohomology class $c \in H^1(M, \bb R)$ a closed $1$-form $\xi_c \in [c]$. Then a random homology $h_T\in H_1(M;\bb R)$ is naturally associated to the Markov process $X$ by the relation
\bel{e:randomh}{
	\langle h_T,c \rangle\coloneq  J_T(\xi_c), \qquad c\in H^1(M;\bb R)
}
The following remark is an immediate consequence of the contraction principle \cite[Chapter~4.2.1]{dembo2010} and standard properties of geometric rough paths/Stratonovich integrals, and it is quickly proved below.
\begin{remark}
	\label{r:ldh}
	Regardless of the isomorphism $c \mapsto \xi_c$, the law of $h_T$ satisfies a \emph{good} large deviations principle with speed $T$ and rate function $G$.
\end{remark}
The main results of our paper concern the qualitative behavior of the rate function $G(\cdot)$ in terms of the topological properties of the manifold $M$ and the reversibility and curvature properties of the generator $L$ in \eqref{e:generator}.

\section{Main result}
\label{s:main}
In this section we state our main results, Proposition~\ref{p:bound} and Theorem~\ref{t:asympnr}.

\subsection{Quadratic bounds and symmetries}
\label{ss:gc}
Recall that $g$ denotes the metric tensor and $m$ the invariant measure. It is a standard fact that the invariant measure has a smooth, strictly positve density w.r.t.\ to the volume measure on $M$ and we write
\bel{e:vr}{
& m=e^{-V} \, \vol
\\
& r \coloneq b+ \tfrac{1}{2} \nabla V
}
$r$ may be interpreted as the non-reversible part of the drift $b$, see Definition~\ref{d:quasireversible}.

The metric-measure triple $(M,g,m)$ induces a scalar product on real homologies and cohomologies as follows. For each $c\in H^1(M;\bb R)$, there exists a unique closed $1$-form $\eta_c \in [c]$ that is orthogonal to exact forms in $L^2(m)$ (see Section~\ref{ss:dual}). Then $(c,c')\coloneq m( \langle g^{-1}\eta_c,\eta_{c'}\rangle)$ defines a scalar product on $H^1(M;\bb R)$ and $(h,h')$ denotes the dual scalar product on $H_1(M;\bb R)$. Similarly, for each $c\in H^1(M;\bb R)$, there exists a unique $\mb L$-harmonic form $\omega_c \in [c]$, see Definition~\ref{d:mharnom} and Remark~\ref{r:isomorphism}, and we can introduce a second scalar product $(c,c')_r\coloneq m( \langle g^{-1} \omega_c,\omega_{c'}\rangle)$ on $H^1(M;\bb R)$, and $(h,h')_r$ denotes the dual scalar product on $H_1(M;\bb R)$. The index $r$ stresses the dependence on the non-reversible field $r$, see \eqref{e:vr}. In general, see Remark~\ref{r:isomorphism}, $(h,h)_r \le (h,h)$ and the two scalar products coincide in the reversible case $r=0$.

We start defining some relevant extensions of reversibility.
\begin{definition}
	\label{d:quasireversible}
	The generator $L$ (and the vector field $b$) is called
	\begin{enumerate}[label=(\alph*)]
		\item \label{(a)} \emph{reversible}: if it is self-adjoint in $L^2(m)$. Equivalently, if the $1$-form $gb$ is exact, or yet equivalently if the non-reversible field $r$ vanishes, see \eqref{e:vr}.
		\item \label{(b)} \emph{quasi-reversible}: if $gb$ is closed. Equivalently, if the $gr$ is $m$-harmonic.
		\item \label{(c)} \emph{homologically reversible}: if the scalar products introduced above coincide, $(c,c)=(c,c)_r$. Or equivalently, if $r$ satisfies $\langle r,\eta_c\rangle=\operatorname{const}$, for all $c\in H^1(M;\bb R)$.
		      Here $\operatorname{const}$ means that the scalar product is independent
		      of $x \in M$.
		\item \label{(d)} \emph{typically reversible}: if $m(\langle r,\eta_c \rangle)=0$ for every $c\in H^1(M;\bb R)$; namely if the restriction of $J_T$ to closed $1$-forms vanishes as $T\to \infty$.
	\end{enumerate}
\end{definition}
Of course reversibility implies quasi-reversibility, homological and typical reversibility.
On the other hand \ref{(b)} + \ref{(c)} + \ref{(d)} is actually equivalent to reversibility (as easy to show but not needed for the following).
One may check that \ref{(b)} implies \ref{(c)} on the same class of manifolds characterized by the conditions in Theorem~\ref{t:asympnr}-(\ref{a:mminnr}).

\begin{proposition}
	\label{p:bound}
	There exists $\bar h \in H_1(M,R)$, that only depends on $g$ and $b$, such that
	\bel{e:gaussineq}{
		G(h) \le Q(h)\coloneq\tfrac{1}{2} (h -\bar h,h -\bar h)
	}
	and
	\bel{e:eps}{
		\lim_{\eps \downarrow 0} \eps^{-2}\,G(\bar h+\eps\,h)= \tfrac{1}{2} (h,h)_r
	}
	Moreover in the quasi-reversible case (see Definition~\ref{d:quasireversible}-\ref{(b)}), for $\bar c \in H^1(M;\bb R)$ the cohomology class of $gb$, $G$ satisfies
	\bel{e:gc}{
		G(h)-G(-h)=Q(h)-Q(-h)=-2\langle h,\bar c\rangle
	}
\end{proposition}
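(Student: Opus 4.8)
The plan is to handle the three assertions separately, working directly with the variational formula~\eqref{e:rate2}. For a $1$-form $\theta\in L^2(m)$ write $\jmath_\theta$ for the current $\omega\mapsto m(\langle g^{-1}\theta,\omega\rangle)$; then $\jmath_\theta$ is closed iff $\theta$ is $m$-co-closed, and a closed current $j$ has $I(m,j)<\infty$ iff $j-j_m=\jmath_\kappa$ for some $m$-co-closed $\kappa\in L^2(m)$, in which case $I(m,j)=\tfrac12\|\kappa\|_m^2$. Since $m$ is invariant, $j_m(df)=m(Lf)=0$, so $j_m$ is closed; define $\bar h\in H_1(M;\bb R)$ by $\langle\bar h,c\rangle:=j_m(\omega)$ for any closed $\omega\in[c]$ — unambiguous, equal to $m(\langle r,\omega\rangle)$ since $gr$ is $m$-co-closed, hence depending only on $(g,b)$. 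For~\eqref{e:gaussineq} I test~\eqref{e:rate2} with $\mu=m$: the weighted Hodge decomposition splits an $m$-co-closed $\kappa$ as $\kappa=\eta_{c'}+\kappa_{\mathrm{ce}}$ (harmonic $\perp$ $m$-co-exact), the $m$-co-exact part annihilates closed forms so the homology of $j_m+\jmath_\kappa$ is determined by $\eta_{c'}$ alone (via $\jmath_{\eta_{c'}}(\eta_c)=(c',c)$), while $\|\kappa\|_m^2=\|\eta_{c'}\|_m^2+\|\kappa_{\mathrm{ce}}\|_m^2$; so to represent $h$ one needs $(c',c)=\langle h-\bar h,c\rangle$ for all $c$, and the optimal choice $\kappa=\eta_{c^\ast}$ (with $\kappa_{\mathrm{ce}}=0$) gives $G(h)\le I(m,j_m+\jmath_{\eta_{c^\ast}})=\tfrac12\|\eta_{c^\ast}\|_m^2=\tfrac12(h-\bar h,h-\bar h)=Q(h)$, in particular $G(\bar h)=0$.

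For the $\limsup$ half of~\eqref{e:eps} I would linearize at $(m,j_m)$. With $\mu_\eps=Z_\eps^{-1}e^{\eps\phi}m$ and $j_\eps=j_m+\eps\jmath_\kappa$, $\kappa$ an $m$-co-closed $1$-form whose current represents $h$, one has $j_{\mu_\eps}=j_m+\eps\jmath_{\sigma_\phi}+O(\eps^2)$ with $\sigma_\phi:=\phi\,gr-\tfrac12 d\phi$ (the first variation of $\mu\mapsto j_\mu$ at $m$, a one-line integration by parts from $j_\mu(df)=\mu(Lf)$), hence $I(\mu_\eps,j_\eps)=\tfrac{\eps^2}{2}\|\kappa-\sigma_\phi\|_m^2+O(\eps^3)$, so $\limsup_\eps\eps^{-2}G(\bar h+\eps h)\le\inf\tfrac12\|\kappa-\sigma_\phi\|_m^2$ over such $(\phi,\kappa)$. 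Setting $A:=\kappa-\sigma_\phi$, the constraint $d^\dagger\kappa=0$ forces $\phi=-(L^\ast)^{-1}d^\dagger A$ (note $d^\dagger A$ has $m$-mean zero), and substituting, the requirement that $\jmath_\kappa$ represent $h$ becomes the $b_1$ linear conditions $\langle A,\omega_c\rangle_m=\langle h,c\rangle$, where $\omega_c:=\eta_c-d\,L^{-1}\big(\langle r,\eta_c\rangle-m(\langle r,\eta_c\rangle)\big)$ is the closed form in $[c]$ with $\tfrac12 d^\dagger\omega_c-\langle r,\omega_c\rangle$ constant — the $\mb L$-harmonic representative of Definition~\ref{d:mharnom}. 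Minimizing $\tfrac12\|A\|_m^2$ under these conditions gives $A=\omega_{c_h}$, with $c_h\in H^1(M;\bb R)$ defined by $(c_h,c)_r=\langle h,c\rangle$ for all $c$, of value $\tfrac12\|\omega_{c_h}\|_m^2=\tfrac12(c_h,c_h)_r=\tfrac12(h,h)_r$; the corresponding smooth $(\phi,\kappa)$ yields admissible $(\mu_\eps,j_\eps)$ for small $\eps$, so $\limsup_\eps\eps^{-2}G(\bar h+\eps h)\le\tfrac12(h,h)_r$.

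For the matching $\liminf$, take near-minimizers $(\mu,j)$ of $G(\bar h+\eps h)$, so $I(\mu,j)\le Q(\bar h+\eps h)+\eps^3=O(\eps^2)$ and a fortiori $\inf_{j'}I(\mu,j')=O(\eps^2)$. This Donsker–Varadhan functional of $\mu$ vanishes exactly at $\mu=m$, and a stability estimate around that unique minimizer — which is quadratically nondegenerate by a Fredholm argument using compactness of $M$ and ergodicity of $L$ — forces $\tfrac{d\mu}{dm}$ within $O(\eps)$ of $1$ in $W^{1,2}(m)$, so $\mu\to m$ narrowly along any such sequence. Now use the elementary bound $2I(\mu,j)\ge 2(j-j_\mu)(\omega)-\mu(|\omega|^2)$ with $\omega=\eps t\,\omega_{c_h}$, together with the identity $j_\mu(\omega_{c_h})=\langle\bar h,c_h\rangle$ valid for every $\mu$ (from $\omega_{c_h}=\eta_{c_h}-dG$, $LG=\langle r,\eta_{c_h}\rangle-\langle\bar h,c_h\rangle$, $j_\mu(dG)=\mu(LG)$, and $j_\mu(\eta_{c_h})=\mu(\langle r,\eta_{c_h}\rangle)$): then $(j-j_\mu)(\omega_{c_h})=\eps(h,h)_r$, optimizing in $t$ gives $2I(\mu,j)\ge\eps^2(h,h)_r^2/\mu(|\omega_{c_h}|^2)$, and since $\mu(|\omega_{c_h}|^2)\to m(|\omega_{c_h}|^2)=(h,h)_r$ one obtains $\liminf_\eps\eps^{-2}G(\bar h+\eps h)\ge\tfrac12(h,h)_r$, which with the previous paragraph proves~\eqref{e:eps}.

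For~\eqref{e:gc}: for $(\mu,j)\in\HH$, finite Fisher information makes the $\mu$-density $gr-\tfrac12 d\log\tfrac{d\mu}{dm}$ of $j_\mu$ square-integrable, so all the norms below are finite and $I(\mu,j)-I(\mu,-j)=\tfrac12\|j-j_\mu\|_\mu^2-\tfrac12\|j+j_\mu\|_\mu^2=-2\langle j,j_\mu\rangle_\mu$; since $j$ is closed its $\mu$-density is $\mu$-co-closed, hence orthogonal in $L^2(\mu)$ to the exact summand $-\tfrac12 d\log\tfrac{d\mu}{dm}$ of that of $j_\mu$, leaving
\[
	I(\mu,j)-I(\mu,-j)=-2\,j(gr)\qquad\text{for every closed }j.
\]
In the quasi-reversible case $gr=\eta_{\bar c}$ is closed, in the class $\bar c=[gb]$, so $j(gr)=\langle h,\bar c\rangle$ whenever $j$ represents $h$; taking the infimum over such $j$ — equivalently over $-j$ representing $-h$ — gives $G(-h)=G(h)+2\langle h,\bar c\rangle$, while $gr=\eta_{\bar c}$ also gives $(\bar c,c')=m(\langle r,\eta_{c'}\rangle)=\langle\bar h,c'\rangle$ for all $c'$, hence $(h,\bar h)=\langle h,\bar c\rangle$ and $Q(h)-Q(-h)=-2(h,\bar h)=-2\langle h,\bar c\rangle$, as required. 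The main obstacle is~\eqref{e:eps}: its $\limsup$ hinges on recognizing the minimizer of the tangent problem as the $\mb L$-harmonic form $\omega_{c_h}$ — precisely what the weighted Albanese map and the lifted Markov generator of Section~\ref{s:tools} are built to make transparent — and its $\liminf$ on the non-perturbative rigidity confining near-optimal competitors to an $O(\eps)$-neighbourhood of $(m,j_m)$; by contrast~\eqref{e:gaussineq} and~\eqref{e:gc} are comparatively soft, the only subtlety being the finiteness of the $L^2(\mu)$-norms that $\HH$ secures.
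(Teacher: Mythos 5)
Your argument follows the same three-part route as the paper, and is correct in substance: for \eqref{e:gaussineq} you restrict the infimum to $\mu=m$ and use the weighted Hodge decomposition (this is exactly Lemma~\ref{l:upboundgauss}); for the $\limsup$ half of \eqref{e:eps} you parametrize perturbations $\mu_\eps=Z_\eps^{-1}e^{\eps\phi}m$, linearize $j_\mu$, and solve the resulting finite-dimensional quadratic problem, arriving at the $\mb L$-harmonic representative $\omega_{c_h}$ as minimizer; for the $\liminf$ you test the dual formulation of $I$ against a multiple of $\omega_{c_h}$ and use $j_\mu(\omega_{c_h})=\langle\bar h,c_h\rangle$ for all $\mu$ (this is the paper's \eqref{e:gacont}--\eqref{e:ccr}, but you jump directly to the $\mb L$-harmonic form rather than writing $\eta_c+df$ and choosing $f$); and for \eqref{e:gc} you compute the same quasi-symmetry identity $I(\mu,j)-I(\mu,-j)=-2j(gr)$. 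Your $\limsup$ derivation is arguably cleaner than the paper's: the paper writes down $\nu^{\eps,h}=(1+\eps u)m$ with $u$ solving $L^\dagger u=d^\ast_m\omega^h$ and then verifies, whereas you arrive at the minimizer systematically. Two points to tighten. First, in the $\liminf$ you invoke a ``stability estimate around the unique minimizer, quadratically nondegenerate by a Fredholm argument'' to force $d\mu/dm$ within $O(\eps)$ of $1$ in $W^{1,2}(m)$. This is both unproved and stronger than needed: all that enters at the end is $\mu(|\omega_{c_h}|^2)\to m(|\omega_{c_h}|^2)$, which requires only narrow convergence $\mu\to m$. That follows, as in the paper, from compactness of the sublevel sets of $I$ (Remark~\ref{r:good}) together with the fact that $(m,j_m)$ is the unique zero of $I$; no Fredholm input is required. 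Second, in \eqref{e:gc} you assert that the $\mu$-density of a closed current in $\HH_h$ is $L^2(\mu)$-orthogonal to $d\log\varrho$. This is correct but not immediate, since $\log\varrho$ is not a valid test function when $\varrho$ is merely of finite Fisher information and may vanish somewhere. The paper circumvents this by first proving the identity on the dense subclass $\tilde\HH$ of smooth, strictly positive densities with smooth current density, and then extending by the lower-semicontinuous envelope property of $I$; you should insert this approximation step, or give a direct truncation argument, to make the orthogonality rigorous.
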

The last proposition shows in particular that
\begin{itemize}
	\item For quasi-reversible processes,  $G$ enjoys a Gallavotti-Cohen symmetry, actually the same Gallavotti-Cohen symmetry that $Q$ trivially satisfies.

	\item For homologically reversible processes, $Q$ provides both a global upper bound and the small-homology limit of $G$, and also in this case \eqref{e:gaussineq} writes $G(h)\le \tfrac 12 \langle G''(\bar h)(h-\bar h) ,  h-\bar h \rangle$.

	\item $G(h)=0$ if and only if $h=\bar h = \lim_{T\to \infty} h_T/T$. $\bar h$ is actually the rotation number of the current $mr$, see Remark~\ref{r:hbar}.

	\item For typically reversible processes, $G$ is minimized at $h=0$ and it is quadratic around $0$.
\end{itemize}
More generally the quadratic bound in \eqref{e:gaussineq} can be interpreted as a sub-gaussian bound for the large deviations of the random homology $h_T$ introduced in \eqref{e:randomh}. In the next Section~\ref{ss:ag} we fully characterize the cases where equality holds, the motivation for such a question is briefly explained in Section~\ref{ss:open} below.

\subsection{Asymptotically Gaussian homologies}
\label{ss:ag}
We say that the diffusion process $X$ associated to the generator $L$ has \emph{asymptotically Gaussian} homology if equality holds in \eqref{e:gaussineq}, that is
\bel{e:gauss2}{
	G(h)= Q(h) \coloneq \tfrac 12 (h -\bar h,h - \bar h)
}
This wording is due to the fact that, whenever the restriction of $J_T$ to harmonic $1$-forms is Gaussian (for instance on a flat torus), the equality in \eqref{e:gaussineq} holds indeed. In this section we characterize asymptotically Gaussian homologies via a topological rigidity condition and an equivalent stochastic interpretation. Informally speaking, an asymptotically Gaussian homology can only rise from an underlying diffusion on flat tori.

The metric-measure space $(M,g,m)$ is naturally associated to the generator $L$: indeed one can recover $g$ from $L$, see e.g.\  \cites{ambrosio2005gradient,bakry2014}, while $m$ is just the invariant measure of $L$. We then borrow the
notion of $m$-weighted minimality from such a metric-measure context, see \cites{cheng2015stability,cheng2020minimal}, and we conveniently rephrase it here in our framework.
\begin{definition}
	\label{d:minimal}
	For $e^{-V}$ the density of $m$ w.r.t.\ the volume measure on $M$ as above, and for $N$ a submanifold with induced volume $\sigma$, $N$ is called $m$-minimal if any of the following equivalent conditions is satisfied
	\begin{enumerate}
		\item $N$ is a stationary point of the $m$-volume functional $N\mapsto  \int_N e^{-V} d\sigma$.
		\item The $m$-mean curvature $\mb H_m \coloneq \mb H+\nabla V^\perp$ vanishes, where $\mb H$ is the usual mean curvature and $\nabla V^\perp$ is the normal (to $N$) projection of $\nabla V$.
	\end{enumerate}
	If the dimension $d\neq 1,2$, the previous conditions are also equivalent to
	\begin{enumerate}
	\item $N$ is minimal w.r.t.\ the conformally equivalent metric $g'=e^{2V/d}g$.
	\end{enumerate}
\end{definition}

We finally state our main result. In the following theorem and hereafter, $b_1 \equiv b_1(M)$ is the first Betti number of $M$.
\begin{theorem}
	\label{t:asympnr}
	The following are equivalent.
	\begin{enumerate}
		\item\label{a:agnr} The generator $L$ has asymptotically Gaussian homology, see Section~\ref{e:gauss2}.
		\item \label{a:mminnr} $M$ is a locally trivial fiber bundle over a flat torus $\bb T^{b_1}$, with $m$-minimal fibers, and $b$ is homologically reversible.
		\item\label{a:bmnr} There exists a smooth map $\phi\colon M\to \bb T^{b_1}$ and \footnote{It follows from the proof that $\tilde h$ is characterized by $\bar h$ introduced in Section~\ref{ss:gc}, up to a linear transformation. In particular $\tilde h=0$ for typically reversible processes, see Definition~\ref{d:quasireversible}.} $\tilde h\in H_1(\bb T^{b_1};\bb R)$ such that $Y=\phi(X)$ is a solution to the SDE
		      \bel{e:sdetorus}{
			      dY_t=\tilde h\,dt+dW_t
		      }
		      where $W$ is a Brownian motion on $\bb T^{b_1}$ w.r.t.\ a flat metric on $\bb T^{b_1}$.
	\end{enumerate}
	In particular $b_1\le \mathrm{dim}(M)$ and if $b_1=\mathrm{dim}(M)$ then $M$ is a flat torus and $X$ solves \eqref{e:sdetorus}  with $\tilde h=\bar h$.
\end{theorem}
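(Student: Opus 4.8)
The plan is to read off the extremal case from the structural descriptions in items~(\ref{a:mminnr}) and~(\ref{a:bmnr}) of the theorem, which we may now take as given. The inequality $b_1\le\dim(M)$ is immediate from~(\ref{a:mminnr}): the bundle projection $\pi\colon M\to\bb T^{b_1}$ is surjective with fibers diffeomorphic to a fixed closed submanifold $F$, so that $\dim(M)=b_1+\dim(F)\ge b_1$. (The same bound also follows from~(\ref{a:bmnr}): the intertwining relation below forces $d\phi_x$ to have rank $b_1$ at every point, hence $\dim(M)\ge b_1$.)

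Assume from now on that $b_1=\dim(M)$, so $\dim(F)=0$; since $M$ is compact, $\pi$ is then a finite covering of $\bb T^{b_1}$, hence $M$ is diffeomorphic to $\bb T^{b_1}$ and in particular $\pi_1(M)\cong\bb Z^{b_1}$. It remains to show that $g$ is flat, and this is where the diffusion is genuinely used; I would work through the map $\phi$ of item~(\ref{a:bmnr}). Since $Y=\phi(X)$ is a diffusion, Dynkin's formula applied to $X$ and to $Y$ and then subtracted shows that $\int_0^t\big(L(f\circ\phi)(X_s)-(L_0f)(\phi(X_s))\big)\,ds$ is a continuous martingale of bounded variation, hence identically zero, where $L_0=\tfrac12\Delta_{g_0}+\langle\tilde h,d\cdot\rangle$ is the generator of \eqref{e:sdetorus}; since $X$ has full support in $M$ this gives $L(f\circ\phi)=(L_0f)\circ\phi$ for every smooth $f$ on $\bb T^{b_1}$. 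Comparing the coefficients of the second derivatives of $f$, which may be prescribed freely at any point, yields $(d\phi_x)_\ast g_x^{-1}=g_0^{-1}$ for all $x$; as $\dim(M)=b_1$ this says precisely that $\phi$ is a local isometry between $(M,g)$ and the flat torus $(\bb T^{b_1},g_0)$. Hence $g$ is flat, and $M$, being a compact flat manifold with $\pi_1(M)\cong\bb Z^{b_1}$, is a flat torus by the Bieberbach theorems.

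For the last assertion, functoriality of the construction \eqref{e:randomh} along $\phi$ gives $J^Y_T(\omega)=J^X_T(\phi^\ast\omega)$ for closed $\omega$; since $J^X_T$ vanishes on exact forms in the limit $T\to\infty$, the random homologies satisfy $h^Y_T-\phi_\ast h^X_T\to 0$, and letting $T\to\infty$ and invoking Proposition~\ref{p:bound} — whereby the long-time limit of $h^X_T$ is $\bar h$, while that of $h^Y_T$ is $\tilde h$ by \eqref{e:sdetorus} — one obtains $\tilde h=\phi_\ast\bar h$. The map $\phi$ supplied by the proof induces an isomorphism on first homology (this is the content of the footnote to item~(\ref{a:bmnr}), reflecting that $\phi$ is essentially the weighted Albanese map), so, being simultaneously a finite covering, it is a diffeomorphism; identifying $M$ with $\bb T^{b_1}$ through $\phi$ thus turns $\tilde h=\phi_\ast\bar h$ into $\tilde h=\bar h$, and \eqref{e:sdetorus} becomes an SDE for $X$ on the flat torus $M$ with drift $\bar h$.

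I expect the middle paragraph to be the crux. Topology by itself only shows that $M$ covers $\bb T^{b_1}$; one must feed in the dynamics — through the intertwining $L(f\circ\phi)=(L_0f)\circ\phi$ and the resulting matching of principal symbols — to promote this to a \emph{Riemannian} covering and thereby obtain flatness. The remaining ingredients are standard facts on compact flat manifolds together with the naturality of the random-homology construction.
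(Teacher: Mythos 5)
Your proposal proves only the final \emph{``In particular''} sentence of the theorem, and explicitly takes the three-way equivalence of items~(\ref{a:agnr}),~(\ref{a:mminnr}),~(\ref{a:bmnr}) as given. That equivalence is the substantive content of the statement, and nothing in your write-up touches it. The paper's proof occupies itself almost entirely with the implications $(1)\Rightarrow(2)\Rightarrow(3)\Rightarrow(1)$: the step $(1)\Rightarrow(2)$ is driven by a variational argument at the minimizer showing that asymptotic Gaussianity forces $m$-harmonic forms to have pointwise constant length and $\langle r,\eta\rangle$ constant (Lemma~\ref{l:costantl}), which in turn makes the weighted Albanese map a Riemannian submersion with $m$-minimal fibers (Lemma~\ref{l:constantminimal}) and invokes Hermann's theorem for local triviality; the step $(2)\Rightarrow(3)$ runs through Proposition~\ref{p:m_harmonicity_equiv_m_minimality} and the drift computation of Lemma~\ref{l:2implies3}; the step $(3)\Rightarrow(1)$ is a direct Gaussianity computation for $J_T$ restricted to pullbacks of harmonic forms on the flat torus. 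None of these ideas appear in your proposal, so as a proof of the theorem it has a fundamental gap.

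On the part you do address, the reasoning is broadly along the right lines but leans on an unproved identification: to pass from $\tilde h=\phi_\ast\bar h$ to $\tilde h=\bar h$, you assert that $\phi$ is ``essentially the weighted Albanese map'' and hence a degree-one covering, but item~(\ref{a:bmnr}) only asserts the existence of \emph{some} smooth $\phi$, and the footnote does not say what you read into it. A finite covering $\phi\colon\bb T^{b_1}\to\bb T^{b_1}$ inducing an isomorphism on $H_1(\,\cdot\,;\bb R)$ can still have degree $>1$ (e.g.\ multiplication by $2$), so you would need to show that the integral lattice map $\phi_\ast\colon H_1(M;\bb Z)\to H_1(\bb T^{b_1};\bb Z)$ is surjective, which you do not. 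That said, your intertwining-of-generators argument via Dynkin's formula to match principal symbols and deduce flatness of $g$ is a nice observation, and arguably more explicit than the paper, which extracts $b_1\le\dim(M)$ and the maximal-rank statement from the quadratic variation of $\phi(X)$ inside the proof of $(3)\Rightarrow(1)$ and treats the $b_1=\dim(M)$ case as immediate.
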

In some sense, the previous theorem contains two statements. The first one is purely geometrical: asymptotic gaussianity of the homology is equivalent to the metric measure space $(M,g,m)$  being a locally trivial fiber bundle over a flat torus in a way depending specifically on the weight $m$. The second states that only homologically reversible processes can have asymptotically gaussian homologies. For instance, in the reversible case, the theorem boils down to a differential geometric characterization: a reversible process has asymptotically gaussian homology if and only if $M$ is a locally trivial fiber bundle over a flat torus $\bb T^{b_1}$, with $m$-minimal fibers.

We remark that there exist non-trivial examples of manifolds $M$ endowed with a diffusion generator $L$ satisfying any of the equivalent conditions of Theorem~\ref{t:asympnr}. A trivial example is a product of a flat torus with diffusion as in Thoerem~\ref{t:asympnr}-\ref{a:bmnr} with a simply connected manifold and an independent random dynamics. However, even in the case of Brownian motion $b=0$, one can show non-trivial examples such as formal manifolds, see \cite{Kotschick2001}. Indeed harmonic forms on formal manifolds have constant length, a property which implies  Theorem~\ref{t:asympnr}-\ref{a:mminnr} (provided $b=0$), as proved in \cite[Proposition~5]{nagy2004length}.

\begin{remark}
	\label{r:nonrevslower}
	It is folklore knowledge that non-reversible processes converge to equilibrium faster than their symmetric (reversible) part. In our framework, this means that as $t\to \infty$, one expects the process $X_t$ with generator \eqref{e:generator} to converge in law to its stationary limit, faster than a process, say $Y_t$, with generator $L'f=\tfrac{1}{2}\Delta f - \langle \nabla V,df\rangle$, where $V$ is defined as in \eqref{e:vr}. In other words, $X$ and $Y$ have the same invariant measure and quadratic variation (informally speaking, the same simulation complexity), but the presence of the $m$-divergence free drift $r\neq 0$ speeds up the convergence to 'equilibrium'.

	This idea has been used to speed up sampling of measure in high-dimension using non-reversible MCMC. There are few rigourous results proving such a phenomenon, in particular when considering the convergence of the empirical measure $\tfrac 1T \int_0^T\delta_{X_s}ds$ to the invariant measure. One way to establish this speed-up effect, consists in proving that the large deviations rate of the empirical measure of $X$ is larger than the one of $Y$, meaning that it is less likely for the empirical measure of $X$ to fluctuate away from the invariant measure. This is rigorously established for diffusions in \cite{rey2015irreversible}.

	To our surprise, Theorem~\ref{t:asympnr} provides a counterexample to this floklore expectation. Empirical measures may be recovered calculating empirical currents on exact forms, a well-known feature of geometric rough paths. Yet, as soon as one enriches the considered set of observables to include the action of currents on \emph{closed} forms, the picture is actually reversed, at least for manifold with some form of flatness (in the sense of Theorem~\ref{t:asympnr}-(2)). Indeed, reversible or more in general homologically reversible processes may achieve equality in \eqref{e:gaussineq}, which is instead a strict inequality for non-homologically reversible processes, see Theorem~\ref{t:asympnr}, (1) $\Rightarrow$ (2).

	To make a trivial example\footnote{In this explicit example, it is easily seen that one may reduce to the reversible case $\bar h=0$ via a rotation that does not effect the convergence speed. That is why we can compare the convergence speed as in \cite{rey2015irreversible} even if the process $X$ is homologically reversible but not reversible for $\bar h\neq 0$}, fix $\bar h\in \bb R^d$ and consider the two processes on a standard flat torus
	\bel{e:yyy}{
		& \dot X = \bar h + \dot W
		\\
		& \dot Y= r(Y) + \dot W
	}
	where $r$ is a divergence-free vector field with $\int r(y)\,dy=\bar h$. Theorem~\ref{t:asympnr} guarantees that the large deviation rate (in the long time limit) of the random homology associated to $X$ is strictly larger than the homology associated to $Y$, while the situation is exactly reversed for the deviations of the empirical measure. This shows that in general non-reversible perturbations may fail to provide a speed-up when sampling invariant observables that are not functions of the empirical measure; otherwise stated $Y$ would in this case be a better choice to sample the volume measure, while $X$ would be a better choice to sample $\bar h$. The authors found this phenomenon non-trivial, unexpected and of some interest for the MCMC community.
\end{remark}

\subsection{Open problems}
\label{ss:open}
In this section we present two open questions. The second one in particular was one of our initial motivations to investigate the problem.

To state the first open question, we start by a corollary that is an easy consequence of \cite[page~57]{Mcmullen2019} and \eqref{e:eps}.
\begin{corollary}
	\label{c:torelli}
	If $d=2$ and $b=0$, there is a one-to-one mapping between Riemannian tensors $g$  on $M$, up to a normalized conformal transformation, and the rate function $G$. Namely, observing the random homology, one can reconstruct the generator of the process up to a random time change of average $1$.
\end{corollary}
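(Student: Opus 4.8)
The plan is to split the claim into two halves: that the rate function $G$ is an invariant of the normalized conformal class of $g$, and that it is a \emph{complete} such invariant. The inputs are \eqref{e:eps} and the fact that in dimension two Hodge theory on $1$-forms is conformally invariant, together with the Torelli-type statement of \cite[page~57]{Mcmullen2019}.

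First I would record the simplifications forced by $b=0$: then $L=\tfrac12\Delta$ is self-adjoint in $L^2(m)$, the invariant measure is the normalized volume $m=\vol(M)^{-1}\vol$, the non-reversible field $r$ of \eqref{e:vr} vanishes, $\bar h=0$, and the two scalar products of Section~\ref{ss:gc} coincide. By \eqref{e:eps}, $G$ determines the quadratic form $h\mapsto\tfrac12(h,h)=\lim_{\eps\downarrow0}\eps^{-2}G(\eps h)$ on $H_1(M;\bb R)$, equivalently the scalar product $(c,c')=m(\langle g^{-1}\eta_c,\eta_{c'}\rangle)$ on $H^1(M;\bb R)$, where $\eta_c$ is the $g$-harmonic representative of $c$. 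Since $d=2$, the Hodge star on $1$-forms, the harmonicity of $1$-forms, and the $2$-form $\eta_c\wedge\ast\eta_{c'}$ are all conformally invariant; hence $\vol(M)(c,c')=\int_M\eta_c\wedge\ast\eta_{c'}=c\cup(\ast c')$ is the unnormalized Hodge pairing, an invariant of the conformal class alone, while the prefactor $\vol(M)^{-1}$ records the total volume.

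To separate these, I would use the $g$-independent unimodular cup product to define the endomorphism $\Psi$ of $H^1(M;\bb R)$ by $c\cup\Psi c'=(c,c')$. Since $\Psi=\vol(M)^{-1}\ast$ and $\ast^2=-\mathrm{id}$ on $H^1$, one has $\Psi^2=-\vol(M)^{-2}\mathrm{id}$, so $\vol(M)$ and then the Hodge star $\ast$ on $H^1(M;\bb R)$ are read off from $G$; by \cite[page~57]{Mcmullen2019}, the polarized Hodge structure $(H^1(M;\bb R),\cup,\ast)$ determines the conformal structure of $M$ (with the usual caveats of the Torelli theorem). Thus $G$ determines the conformal class of $g$ and $\vol(M)$, i.e.\ $g$ up to a volume-preserving conformal change. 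For the converse, if $g'=e^{2\phi}g$ with $m(e^{2\phi})=1$, then because $\Delta_{g'}=e^{-2\phi}\Delta_g$ on functions in dimension two, Brownian motion on $(M,g')$ is the time change of Brownian motion on $(M,g)$ by the clock $A_t=\int_0^te^{2\phi}(X_s)\,ds$; the Stratonovich integral is insensitive to such a reparametrization, so $h_T^{g'}=\tfrac{\gamma_T}{T}\,h_{\gamma_T}^{g}$ with $\gamma_T$ the inverse of $A$, and $\gamma_T/T\to m(e^{2\phi})=1$ almost surely, whence $h_T$ obeys the same large deviations principle under $g$ and $g'$, i.e.\ $G_{g'}=G_g$. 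A time change with $m(\rho)=1$ is exactly a ``random time change of average $1$'', and the two directions together give the bijection.

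The only non-formal point is the invariance direction: one must promote $\gamma_T/T\to1$ to equality of the large deviations rates of $h_T^{g}$ and $h_T^{g'}$, i.e.\ check that running the $g$-process for the path-dependent time $\gamma_T=T(1+o(1))$ does not change the rate of $h_{\gamma_T}^{g}$. This is where one leans on the goodness of the LDP (Remark~\ref{r:ldh}) together with standard exponential-equivalence estimates for diffusions; everything else — the reduction to the Hessian of $G$ via \eqref{e:eps}, the conformal invariance of surface Hodge theory, and the Torelli input — is immediate. (For genus $0$ the statement is vacuous: $H_1(M;\bb R)=0$ and $G\equiv0$; the content lies in genus $\ge1$.)
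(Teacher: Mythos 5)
Correct, and it is essentially the argument the paper has in mind: the paper does not spell out a proof of this corollary (it merely presents it as ``an easy consequence of \cite[page~57]{Mcmullen2019} and \eqref{e:eps}'' to motivate an open question), and your proposal is a careful instantiation of precisely that hint. The reduction of $G$ to its Hessian via \eqref{e:eps}, the observation that with $b=0$ one has $r=0$, $\bar h=0$, $(\cdot,\cdot)_r=(\cdot,\cdot)$ and $m=\vol(M)^{-1}\vol$, the use of the metric-independent cup product to write the quadratic form as $\vol(M)^{-1}\,c\cup(\ast c')$, and the recovery of $\vol(M)$ from $\Psi^2=-\vol(M)^{-2}\mathrm{id}$ followed by the Hodge star on $H^1$ and then Torelli, is exactly the intended chain. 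Your converse direction through $\Delta_{g'}=e^{-2\phi}\Delta_g$ and the reparametrization-invariance of the Stratonovich line integral is the correct mechanism behind the phrase ``random time change of average~$1$''; you are also right to flag that upgrading $\gamma_T/T\to 1$ a.s.\ to equality of the large deviations rates of $h_T^{g}$ and $h_T^{g'}$ is the only step that requires an actual estimate (exponential equivalence, using an LDP for the occupation clock $A_T/T$ and a modulus-of-continuity bound for $T\mapsto h_T$), which is standard but not free. The two caveats you mention — genus $0$ being vacuous, and ``one-to-one'' holding only with the usual Torelli granularity (hyperelliptic involution, automorphisms acting trivially on $H_1$) — are appropriate and consistent with the corollary's informal phrasing.
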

In the general reversible case, one may wonder what information one gets on the metric-measure space $(M,g,m)$ by the knowledge of $G(\cdot)$. To answer this question, one may need some weighted version of Torelli's theorem in the form presented by McMullen.

A second question concerns the maximization of $G(\cdot)$ within classes of Riemannian metrics. First one should rule out trivial transformations, as for instance multiplying the metric by a constant just scales $G$ quadratically. However, once trivial transformations are factored out, one may wonder whether manifolds obtained as some rigid extension of manifolds with constant curvature maximize $G$. That would mean, even in the case $b=0$, that breaking the symmetry of loops of a Brownian motion is the least likely on manifolds with constant curvature. Theorem~\ref{t:asympnr} is a result in this direction, although limited to the case of zero curvature. Indeed such a theorem states that  $G(h)/Q(h)\le 1$, with equality holding on flat tori, or manifolds were fluctuations of the homology only arises from a random dynamic on flat tori. Normalizing by $Q(h)$ can be interpreted here as factoring out trivial transformation of the metric. The question however remains open: how to state and prove some maximal properties of manifold with constant curvature, even in the case of (possibly punctured) hyperbolic surfaces?

\section{Technical preliminaries}
\label{s:tools}
\subsection{Lifted generator}
\label{ss:lift}
$d^\ast \colon \mc D^{1}\to \mc D^{0}$ denotes the usual deRham codifferential, and recalling that $m$ is the invariant measure and $V$ was defined in \eqref{e:vr}, we let $d^\ast_m\colon D^{1}\to D^{0}$ be the $m$-weighted codifferential which we can define by duality:
\bel{e:codfm}{
	m (f\,d^\ast_m \omega) \coloneq -m( \langle \nabla f, \omega \rangle) ,\qquad \text{for all $f \in \mc D^0$}
}
or equivalently, as it can be verified via an integration by parts
\bel{e:codfm2}{
	d^\ast_m \omega = d^\ast \omega - \langle \nabla V, \omega \rangle
}
Recall the definition of the generator $L$ in \eqref{e:generator}. Define the lifted generator $\mb L\colon \mc D^1 \to \mc D^0$ as
\bel{e:Lforms}{
	& \mb L \omega : = \frac{1}{2} d^{\ast} \omega + \langle b ,\omega \rangle
	= \tfrac{1}2 d^\ast_m \omega + \langle r ,\omega \rangle
}
With this notation the condition that the measure $e^{-V} \vol$ is invariant, reads after a straightforward computation as
\bel{e:rr}{
	d^\ast_m (gr)=0
}
Moreover, whenever $I(\mu,j)<\infty$ and thus in particular $\mu=\varrho\,m$ with $\sqrt{\varrho} \in W^{1,2}(m)$, the typical current $j_\mu$ as defined at the end of Section~\ref{ss:notation} is written as
\bel{e:jmu2}{
	j_\mu(\omega)= \mu (\mb L\omega)= m \big(- \langle \nabla \sqrt{\varrho}, \sqrt{\varrho} \,\omega \rangle\big)+\mu( \langle r,\omega \rangle)
}
where the integral in $m$ in the right hand side makes sense as both sides of $\langle \cdot , \cdot \rangle $ are in $L^2(m)$.

As $\|\omega\|_{\mu}^2\coloneq \mu(|\omega|^2)$ denoted the $L^2(\mu)$ norm (depending on $g$) for $1$-forms, the dual norm on currents, still denoted by $\|\cdot \|_{\mu}$ as remarked in Section~\ref{ss:notation}, is given by
\bel{e:dualnorm}{
\|j\|_{\mu}^2=\sup_{\omega} \, 2 j(\omega)- \|\omega\|_{\mu}^2
}
For a measure $\mu \in \mc{P}(M)$ and a $\mu$-integrable vector field $E$, we usually denote $\mu E$ the current defined by  $(\mu E)(\omega)=\mu ( \langle E, \omega \rangle)$, in other words $\mu E$ is the current with Radon-Nikodym derivative w.r.t.\ $\mu$ given by $E$ in the sense of \cite[Chapter~3]{diestel1977vector}. With this notation, the scalar product $\sjl \cdot,\cdot \sjd_{\mu}$ inducing the norm $\|\cdot\|_{\mu}$ on currents satisfies
\bel{e:jscalar}{
\sjl j, \mu\,g^{-1}\omega\sjd_{\mu} =  j(\omega)
}
In particular if $I(\mu,j)<\infty$ then, see \eqref{e:jmu2}
\bel{e:jmuscalar}{
	\sjl j,  j_\mu \sjd_{\mu} =  j(-\tfrac 12 d\log \varrho + gr )=j(gr )
}

\subsection{Weighted harmonic forms}
\label{ss:dual}
In this section we discuss some straightforward tilted version of the Hodge decomposition.
\begin{definition}
	\label{d:mharnom}
	A $1$-form $\omega \in \mc D^1$ is \emph{$m$-harmonic} if $d\omega=0$ and $d^\ast_m \omega=0$. It is \emph{$\mb L$-harmonic} if $d\omega=0$ and $d \mb L\omega=0$, see \eqref{e:Lforms}.
\end{definition}
It is easily checked that $m$-harmonic means that $\omega$ is in the kernel of a $m$-weighted Hodge laplacian, from which the name. On the other hand, notice that whenever $r=0$, namely whenever the invariant measure $m$ is reversible, the notion of $\mb L$-harmonicity reduces to the one of $m$-harmonicity. Indeed by definition, if $\omega$ is $\mb L$-harmonic, $\mb L\omega$ is constant, and thus it equals $m(\mb L\omega)$. However if $r=0$ then $\mb L \omega = d^\ast_m \omega$ and $m(d^\ast_m \omega)=0$ for $\omega \in \mc D^1$ and thus $d^\ast_m \omega=0$.

As a weighted version of the Hodge decomposition, it is easy to check that the space of $1$-forms can be decomposed as a direct sum
\bel{e:hodge}{
	\mc D^1= \mc D^{1,\mathrm{exact}} \oplus
	\mc D^{1,m\mathrm{-harm}} \oplus
	\big(\mc D^{1,\mathrm{closed}}\big)^\perp
}
the three components of the direct sum represent respectively exact forms, $m$-harmonic forms and forms that are orthogonal in $L^2(m)$ to closed forms. The three components are orthogonal in $L^2(m)$.

Similarly, we can decompose
\bel{e:hodge2}{
	\mc D^1= \mc D^{1,\mathrm{exact}} \oplus
	\mc D^{1,\mb L\mathrm{-harm}} \oplus
	\big(\mc D^{1,\mathrm{closed}}\big)^\perp
}
where now however the decomposition is not, in general, orthogonal in $L^2(m)$. The relation between the weighted Hodge decomposition \eqref{e:hodge} and \eqref{e:hodge2} is uniquely given as follows. If $\omega=df+\eta$ is closed with $\eta$ $m$-harmonic, then by Fredholm alternative we can solve in the unknown $u \in \mc D^0$ ($u$ is smooth, a standard fact by elliptic regularity, see \cite{hormander1963linear})
\bel{e:xxi}{
	-Lu = \langle r,\eta \rangle - m(\langle r,\eta \rangle)
}
which uniquely determines an exact $1$-form $du$, and $\omega=(df-du)+(\eta+du)$ gives the decomposition \eqref{e:hodge2} for closed forms since $\mb L(\eta+du)=\langle r,\eta \rangle +Lu= m(\langle r,\eta \rangle)$ is constant.

\begin{remark}
	\label{r:isomorphism}
	There are two linear isomorphisms
	\bel{e:isomorphism}{
		H^1(M;\bb R) \ni c\mapsto \eta_c \in \mc D^{1,m\mathrm{-harm}}
		\\
		H_1(M;\bb R) \ni h\mapsto \eta^h \in \mc D^{1,m\mathrm{-harm}}
	}
	associating to a cohomology class $c$ a unique $m$-harmonic form $\eta_c$ in class $c$, and to each homology class $h$ a unique $m$-harmonic form $\eta^h$ such that $m(\langle g^{-1} \eta^h,\eta_c \rangle)=\langle h,c\rangle$ for all $c\in H^1(M;\bb R)$. Such isomorphisms are isometries when $\mc D^{1,m\mathrm{-harm}}$ is regarded as a finite-dimensional subspace of $L^2(m)$ and $H^1(M;\bb R)$, $H^1(M;\bb R)$ are equipped with the scalar products defined at the beginning of section~\ref{ss:gc}.

	Similarly there are two linear isomorphisms
	\bel{e:isomorphism2}{
		H^1(M;\bb R) \ni c\mapsto \omega_c \in \mc D^{1,\mb L\mathrm{-harm}}
		\\
		H_1(M;\bb R) \ni h\mapsto \omega^h \in \mc D^{1,\mb L\mathrm{-harm}}
	}
	associating to a cohomology class $c$ a unique $\mb L$-harmonic form $\omega_c$ in class $c$, and to each homology class $h$ a unique $\mb L$-harmonic form $\eta^h$ such that $m(\langle g^{-1}\omega^h,\omega_c \rangle)=\langle h,c\rangle$ for all $c\in H^1(M;\bb R)$. Such isomorphisms are isometries when $\mc D^{1,m\mathrm{-harm}}$ is regarded as a finite-dimensional subspace of $L^2(m)$ and $H^1(M;\bb R)$, $H^1(M;\bb R)$ are equipped with the scalar products defined at the beginning of section~\ref{ss:gc}.
\end{remark}

\subsection{Weighted minimality and harmonicity of submersions}
\label{ss:fib}

In this section we introduce some basic notions of harmonicity and minimality in the context of weighted Riemannian manifolds. Minimality in this sense has been extensively studied in the last decades, see e.g.\ \cite{cheng2020minimal}, however the authors are not aware of any established connection with harmonicity w.r.t.\ Witten Laplacians of Riemannian submersions. Such a connection is well-established in the case without weight, we refer to \cite{eells1964harmonic} for a classical introductory text. See also
\cite{eells1995two} and
\cites{IliasShouman2018,cheng2015stability}, 
for weighted harmonic maps, and \cites{lichnerowicz1969applications,toth1984toroidal,nagano1975minimal} for Albanese maps.

In this section, $(M, g, m)$ is a smooth compact weighted Riemannian manifold, where the weight $m$ is a measure on $M$ with strictly positive density, $m = e^{-V} \vol$. Of course, for us $m$ is to be thought as the invariant probability associated to \eqref{e:generator}.  $(N, g')$ on the other hand is just a smooth Riemannian manifold with no weight associated.

\subsubsection{Weighted tension field}
\label{sss:tension}
We say that a smooth map $\varphi: M \to N$ is \textit{$m$-harmonic} if it is a critical point of the energy functional (which depends on $m$, $g$ and $g'$)
\begin{equation}
	\label{eq:f-energy_functional}
	E_m(\varphi) = \int |d \varphi|^2 \,dm
\end{equation}
This means that if $\Phi \colon [0,1)\times M\to N$ is smooth with $\Phi(0,\cdot)=\varphi(\cdot)$, then
\bel{e:eels}{
\tfrac{d}{dt} E_m(\Phi(t,\cdot))|_{t=0}=0
}
When $V=0$, this is sometimes called \emph{harmonic in the Eells-Sampson sense}. Notice also that if $d\coloneq \operatorname{dim}(M) \ge 3$, then the notion of $m$-harmonicity can be regarded as a standard notion of harmonicity for the conformal equivalent metric $\hat g= e^{-\frac{2}{d-2} V} g$, see e.g.\ \cite[page~189]{Lichnerowicz1969}. However, we do not pursue this point of view here for reasons that will become apparent later.

As a straightforward generalization of the definition of the tension field in the case without weight, define the \emph{$m$-tension field}
\bel{e:mtension}{
	\tau_m(\varphi) \coloneq \tau(\varphi) - d \varphi(\nabla V)
}
where $\tau(\varphi)$ is the usual tension field of $\varphi$, see \cite[Chapter~I.2]{eells1964harmonic}.
As in the standard case $V=0$, it is not hard to check that $\varphi$ is $m$-harmonic if and only if $\tau_m(\varphi) = 0$, see \cite[page~116]{eells1964harmonic}.


\subsubsection{Weighted minimality of submersions}
In this section, we quickly establish the equivalence of $m$-minimality and $m$-harmonicity for Riemannian submersions, a well-known fact when $m=\vol$.
\begin{definition}
	\label{d:minimality}
	Let $\Sigma$ be a smooth compact manifold and $\imath: \Sigma \to M$ a smooth immersion. Let $H$ be the mean curvature normal field of the immersion, and let $(\nabla V)^\perp(x)$ be the orthogonal projection of $\nabla V(x) \in T_xM$ to the normal bundle of $\imath(\Sigma)$ in $x$.

	The \emph{weighted mean curvature} is the normal field
	\bel{e:wmean}{
		H_m= H+(\nabla V)^\perp
	}
	The immersion $\imath$ is \emph{$m$-minimal} if $H_m$ vanishes identically. If $\Sigma \subset M$, $\Sigma$ is called $m$-minimal whenever the inclusion map is $m$-minimal.
\end{definition}
The previous definition is motivated by the fact that $\imath$ is $m$-minimal if and only if it is a critical point of the volume of the immersion, see \cite[Proposition~2]{cheng2015stability}. In particular, this is the usual notion of minimality if $m$ is the volume measure.

Recall that a surjective submersion $\varphi \colon M \to N$ is a \emph{Riemannian submersion} if for all $x \in M$, the restriction of its differential $(d \varphi)_x \colon (\operatorname{ker} d\varphi_x)^{\perp} \to TN_{\varphi(x)}$ is an isometry. The following lemma generalizes propositon in \cite[Chapter~4-C, page~127]{eells1964harmonic} to the weighted case, and we indeed rely on the proof of the standard case. However, one may also check the statement in coordinates fixing an orthonormal frame on the tangent and normal spaces to the fibers.
\begin{proposition}
	\label{p:m_harmonicity_equiv_m_minimality}
	Let $\varphi \colon M \to N$ be a Riemannian submersion. Then $\varphi$ is $m$-harmonic if and only if all fibers of the submersion are $m$-minimal submanifolds in $M$.
\end{proposition}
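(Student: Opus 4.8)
The plan is to reduce to the unweighted statement of Eells--Sampson (as quoted from \cite[Chapter~4-C, page~127]{eells1964harmonic}) by a careful bookkeeping of the extra $\nabla V$ terms. The key identity to establish is a pointwise decomposition of the $m$-tension field $\tau_m(\varphi)$ of a Riemannian submersion into a "vertical" and a "horizontal" contribution. Concretely, for the unweighted case one has the classical formula $\tau(\varphi) = -\,d\varphi\big(\sum_i \mb H_{F_i}\big)$, where the sum runs over an orthonormal frame tangent to the fibers and $\mb H_{F}$ denotes (a multiple of) the mean curvature vector of the fiber through $x$; the horizontal part drops out precisely because $\varphi$ is a Riemannian submersion and its horizontal distribution, pushed forward, gives a constant-length frame downstairs. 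I would first recall/reprove this identity, then add the weight: since $\tau_m(\varphi) = \tau(\varphi) - d\varphi(\nabla V)$ by \eqref{e:mtension}, and since $\nabla V = (\nabla V)^\perp + (\nabla V)^\top$ splits into the part normal to the fiber and the part tangent to it (normal/tangent meaning with respect to the fiber, hence vertical $=$ tangent), the tangential-to-the-fiber piece $(\nabla V)^\top$ is vertical and so lies in $\ker d\varphi$, contributing nothing to $d\varphi(\nabla V)$. Hence only $(\nabla V)^\perp$ survives, and combining with the fiberwise mean-curvature expression for $\tau(\varphi)$ gives
\bel{e:plan_key}{
	\tau_m(\varphi)(x) = -\, d\varphi_x\Big( \mb H(x) + (\nabla V)^\perp(x)\Big) = -\, d\varphi_x\big( \mb H_m(x) \big),
}
where $\mb H$, $\mb H_m$ are the (weighted) mean curvature vectors of the fiber $\varphi^{-1}(\varphi(x))$ at $x$, exactly as in Definition~\ref{d:minimality}/\ref{d:minimal}. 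Here I should be a little careful about whether the vertical part of $\nabla V$ is genuinely irrelevant: it is, for the tension-field computation, but one should note that the weighted mean curvature in Definition~\ref{d:minimality} is defined as a \emph{normal} field, so only $(\nabla V)^\perp$ enters there as well, and the match is exact.

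Granting \eqref{e:plan_key}, the proposition is immediate. If all fibers are $m$-minimal then $\mb H_m \equiv 0$ on $M$, so $\tau_m(\varphi) \equiv 0$, i.e.\ $\varphi$ is $m$-harmonic by the characterization $\tau_m(\varphi)=0 \Leftrightarrow m\text{-harmonic}$ recorded just after \eqref{e:mtension}. Conversely, if $\varphi$ is $m$-harmonic then $\tau_m(\varphi) \equiv 0$; since $\varphi$ is a submersion, $d\varphi_x$ restricted to the horizontal space $(\ker d\varphi_x)^\perp$ is an isomorphism (indeed an isometry), and $\mb H_m(x)$ is by construction normal to the fiber, hence horizontal, so $d\varphi_x(\mb H_m(x)) = 0$ forces $\mb H_m(x) = 0$ for every $x$. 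As $x$ ranges over a fixed fiber this says that fiber is $m$-minimal, and as $x$ ranges over all of $M$ we get it for every fiber.

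The main obstacle is getting the unweighted identity $\tau(\varphi) = -\,d\varphi(\text{sum of fiber mean-curvature vectors})$ stated and justified in a self-contained way, since \cite{eells1964harmonic} phrases it in a slightly different language; I would either cite it directly or, as the paper suggests, verify it in an adapted orthonormal frame. Fixing $x \in M$, choose a local orthonormal frame $e_1,\dots,e_k$ spanning the vertical (tangent-to-fiber) distribution and $e_{k+1},\dots,e_d$ spanning the horizontal distribution, with the horizontal ones arranged so that their $\varphi$-images form a local orthonormal frame on $N$ near $\varphi(x)$ with vanishing covariant derivatives at $\varphi(x)$ (normal coordinates downstairs). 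Then $\tau(\varphi) = \sum_{a=1}^d \big(\tilde\nabla_{e_a} d\varphi\big)(e_a)$ is the trace of the second fundamental form of the map; the horizontal terms $a \ge k+1$ contribute the (vanishing, by the normal-coordinate choice and the submersion property) tension of the "identity-like" map on the base, while the vertical terms $a \le k$ assemble, via the standard second-fundamental-form-versus-second-fundamental-form-of-fiber computation, into $-d\varphi$ of the trace over the vertical directions of the fiber's shape operator, i.e.\ $-d\varphi(\mb H_{\text{fiber}})$. The only subtlety is checking that cross terms and the intrinsic-vs-ambient connection corrections cancel, which is exactly the content of the O'Neill-type formulas for Riemannian submersions; I expect this to be routine but notationally heavy, and it is the one place where care is genuinely needed. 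Everything after \eqref{e:plan_key} is a two-line consequence of the submersion hypothesis and the definitions.
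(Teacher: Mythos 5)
Your proof is correct and follows exactly the same route as the paper: reduce to the unweighted Eells--Sampson identity $\tau(\varphi) = -d\varphi(\tau(i_{\text{fiber}}))$, observe that $d\varphi(\nabla V) = d\varphi((\nabla V)^\perp)$ because the vertical part lies in $\ker d\varphi$, combine into $\tau_m(\varphi) = -d\varphi(\mb H_m)$, and conclude using that $\mb H_m$ is horizontal and $d\varphi$ is injective on the horizontal distribution. The extra attention you devote to an adapted-frame verification of the unweighted identity matches the paper's own remark that one may check it in coordinates.
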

\begin{proof}
	Given a point $y \in N$, let $\Sigma_y$ be the fiber $\varphi^{-1}(y)$, and let
	$\imath_y \colon \Sigma_y \to M$ be the inclusion map.
	As well known and easy to check, there is a trivial relation between the tension of the submersion $\varphi$ and the tension of the inclusions of fibers, \cite[Chapter~4-C]{eells1964harmonic}. That is, for all $x\in M$
	\begin{equation}
		\label{eq:eesa_tau_identity}
		\tau(\varphi)(x) = - d \varphi_x (\tau(i_{\varphi(x)})(x))
	\end{equation}
	Now notice that (since $\varphi$ is a submersion) the kernel of $d\varphi_x$ consists of vectors tangent to $\Sigma$ at $x$, so that $d\varphi(\nabla V)= d\varphi( (\nabla V)^\perp)$.	Therefore by \eqref{e:mtension}
	\bel{eq:tau_m_identity}{
		\tau_m(\varphi)(x) &  = \tau(\varphi)(x) - d\varphi(\nabla V)(x) = - d \varphi_x (\tau(i_{\varphi(x)})(x))-
		d\varphi_x( (\nabla V)^\perp(x))
		\\
		& = - d\varphi_x\big(\tau(i_{\varphi(x)})(x)+ (\nabla V)^\perp(x)\big)
	}
	Now, $\tau(i_{\varphi(x)} )=H(x)$ from \cite[Chapter~2-D, page 119]{eells1964harmonic}, and therefore $\tau_m(\varphi)=  - d\varphi(H+ (\nabla V)^\perp)$. However, since $H+ (\nabla V)^\perp=H_m$ is normal, namely it is orthogonal to the kernel of $d\varphi_x$ at any $x$, we have $\tau_m(\varphi)=0$ iff $H_m=0$.
\end{proof}



\subsection{Weighted Albanese map}
\label{ss:albanese}
In this section we introduce a weighted Albanese map $a_m$ defined on a weighted Riemannian manifold $(M,g,m)$, consistently with the above notation. We quickly follow the standard construction of Albanese maps, carefully considering the dependance on the weight $m\in \mc P(M)$.

Let $\tilde{M}$ be universal cover of $M$ with canonical projection $p \colon \tilde{M} \to M$, $p^{\ast}$ denoting the pullback of $1$-forms via $p$. Fix $\tilde x_0\in \tilde M$, set $x_0=p(\tilde x_0)$. Denote the action of the fundamental group on $\tilde M$ by $(s,\tilde y) \ni \pi_1(M,x_0) \times \tilde M \mapsto s\tilde y \in \tilde M$.

For $\omega \in \mc D^{1,\mathrm{closed}}$, $p^\ast \omega$ is exact on $\tilde{M}$, namely there exists a unique smooth $u_\omega \colon \tilde M \to \bb R$ such that $du_\omega = p^\ast \omega$ and $u_\omega(\tilde x_0)=0$. For $s\in \pi_1(M,x_0)$, it holds $u_\omega(s\tilde y)-u_\omega(\tilde y)$ is independent of $\tilde x_0$ and $\tilde y$, since its differential vanishes. Moreover, $u_{df}(s\tilde y)-u_{df}(\tilde y)=0$, and $\omega \mapsto u_\omega$ is linear. This entails that there exists an abelian representation $\pi_1(M)\ni s\mapsto h_s \in H_1(M;\bb R)$ such that
\bel{e:hs}{
u_{df+\eta_c}(s \tilde y)-u_{df+\eta_c}(\tilde y)= \langle h_s,c\rangle
}
for $\tilde y\in \tilde M$, $f\in D^0$, $c\in H^1(M;\bb R)$ and $\eta_c$ the unique $m$-harmonic form in class c, see Remark~\ref{r:isomorphism}.

Recalling Remark~\ref{r:isomorphism} one can define
\bel{eq:tilde_J}{
	& \tilde{a}_m \colon \tilde{M} \to (\mc D^{1,m\mathrm{-harm}})^\dagger \simeq H_1(M;\bb R)
	\\
	& \tilde{a}_m(\tilde{y})(\eta_c) \coloneq u_{\eta_c}(\tilde{y}) - u_{\eta_c}(\tilde{x}_0) 
}
It follows from \eqref{e:hs} that for if $\tilde y,\tilde y'\in \tilde M$ cover the same point, namely $\tilde y'=s\tilde y$, it holds $(\tilde{J}(\tilde{y}')- \tilde{J}(\tilde{y}))(\eta_c) = \langle h_s, c\rangle$. Now $G=(h_s)_{s\in \pi_1(M)}$ is a discrete lattice of full rank of $H_1(M;\bb R)$, so that $H_1(M;\bb R)/G \simeq \bb T^{b_1}$ is a flat torus, where $b_1$ is the first Betti number of $M$.

Therefore there is a map $a_m$ such that the following diagram commutes
\begin{equation*}
	\begin{tikzcd}
		\tilde{M} \arrow{r}{\tilde{a}_m} \arrow[swap]{d}{p} & {H_1(M;\bb R)} \arrow{d}{/G} \\%
		M \arrow{r}{a_m}& \bb T^{b_1}
	\end{tikzcd}
\end{equation*}

\begin{remark}
	\label{r:Albanese_map_is_harmonic}
	The weighted Albanese map $a_m \colon M \to \bb T^{b_1}$ is $m$-harmonic (in the weighted Eells-Sampson sense).
\end{remark}
\begin{proof}
	In this proof we denote by $\tilde m$ the lift of $m$ to $\tilde{M}$, and $d,d^\ast_{\tilde m}$ the corresponding operators on $M$ built as in \eqref{e:codfm}. Whenever the arrival space $N$ of a map $\varphi$ as in Section~\ref{sss:tension} is a flat torus, the $m$-harmonicity equation $\tau_m(\varphi)=0$ is actually linear (since Christoffel symbols vanish, as one may easily check with similar computations as in \cite[page~109]{eells1964harmonic}) and can be checked componentwise. So it is enough to show that $\tilde a_m$ is $\tilde{m}$-harmonic as a $\bb R^{b_1}$-valued function.

	For $u_\omega$ as above, notice that $d^\ast_{\tilde m} d u_\omega(\tilde x)=(d^\ast_m \omega)(p(\tilde x))$. In particular $\omega$ is $m$-harmonic iff $u_\omega$ is $\tilde m$-harmonic. So that the $\tilde{m}$-harmonicity of $\tilde a_m$ follows straightforwardly from its definition \eqref{eq:tilde_J}.
\end{proof}
We refer to $a_m$ as the $m$-weighted Albanese map.

\section{Quadratic bounds}
\label{s:proofquad}
In this section we prove Proposition~\ref{p:bound}, although we start with showing a simple statement claimed in Section~\ref{s:main}.
\begin{proof}[Proof of Remark~\ref{r:ldh}]
	Fix an isomorphism $c\mapsto \xi_c$ as above and consider the map $(\mu,j)\mapsto h^{j}\in  H_1(M;\bb R)$ defined by duality as
	\bel{e:hmuj}{
		\langle h^{j},c \rangle= j(\xi_c), \qquad \forall c\in H^1(M;\bb R)
	}
	Since $(\xi_c)_{c\in H^1(M;\bb R)}$ is finite-dimensional, this map is continuous and moreover $h_T\equiv h^{J_T}$, see \eqref{e:randomh}. By contraction principle \cite[Chapter~4.2.1]{dembo2010} we get that the law of $h_T$ satisfies a large deviations principle with speed $T$ and rate given by
	\bel{e:gpre}{
		H^1(M;\bb R) \ni h \mapsto \inf \big\{ I(\mu,j),\,(\mu,j) \st h^{j}=h \big\}
	}
	However, since we can restrict to $(\mu,j)$ with $I(\mu,j)<\infty$, it follows that $j$ is closed, and thus the relation $h^{j}=h$ in \eqref{e:gpre} is equivalent to $j(\omega)= \langle h,c\rangle$ for all $\omega \in [c]$, regardless of the original linear isomorphism $c\mapsto \xi_c$ used to define $h_T$.
\end{proof}

Recall that $I$ and $G$ were introduced in Definition~\ref{d:h}.
Recall that $\HH$ was introduced in Definition~\ref{d:h}, and for $h\in H_1(M;\bb R)$, denote
\bel{e:hh}{
	\HH_h\coloneq \big\{ (\mu,j)\in \HH \st j(\omega)= \langle h,c\rangle,\,\text{for all $\omega \in [c]$ and $c\in H^1(M;\bb R)$}\big\}
}
So that $G(h)=\inf_{(\mu,j)\in \HH_h} I(\mu,j)$.

The following remarks are immediate.
\begin{remark}
	\label{r:good}
	Recall that $\mc P(M)$, respectively $\mc J(M)$, are equipped with the weakest topology such that $\mu \mapsto \mu(f)$ is continuous for all $f\in \mc D^0$, respectively $\omega \mapsto j(\omega)$ is continuous for all $\omega \in \mc D^1$. Then the set $\{(\mu,j)\in \mc P(M)\times \mc J(M)\st I(\mu,j)\le k \}$ is compact for each $k\ge 0$. In other words, $I$ is \emph{good} in the sense \cite[Chapter~1.2]{dembo2010}.
\end{remark}

\begin{remark}
	\label{r:hbar}
	Since $m(\langle r,df\rangle)=0$ for $f\in \mc D^1$, it remains defined the rotation number of the current $mr$, namely an element $\bar h \in H_1(M;\bb R)$ such that
	\bel{e:hbar4}{
		m(\langle r,\omega \rangle) = \langle \bar h,c\rangle \qquad \text{for all $\omega \in [c]$ and $c\in H^1(M;\bb R)$}
	}
\end{remark}

\begin{lemma}
	\label{l:upboundgauss}
	For $h\in H_1(M;\bb R)$, let $\mc J_h(M)$ be the space of closed currents with rotation number $h$, that is
	\bel{e:jh}{
		\mc J_h(M)\coloneq \left\{j\in \mc J(M)\st j(\omega)= \langle h,c \rangle, \,\text{for all $\omega \in [c]$ and $c\in H^1(M;\bb R)$}  \right\}
	}
	For $Q(\cdot)$ as defined in \eqref{e:gaussineq}, it holds
	\bel{e:qeq}{
		Q(h)\coloneq \inf_{j \in \mc J_h(M)} I(m,j)
	}
\end{lemma}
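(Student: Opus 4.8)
The plan is to recast the minimization \eqref{e:qeq} as an exercise with the weighted Hodge decomposition. First I would note that for \emph{any} closed current $j$ the pair $(m,j)$ lies in $\HH$ trivially, since $m$ has vanishing Fisher information with respect to itself ($\varrho\equiv 1$); hence by \eqref{e:rate}, $I(m,j)=\tfrac12\|j-j_m\|_m^2$ for every $j\in\mc J_h(M)$. Next I would identify the typical current at the invariant measure: since $m(d^\ast_m\omega)=0$ for all $\omega\in\mc D^1$ (take $f=1$ in \eqref{e:codfm}), the two expressions in \eqref{e:Lforms} give $j_m(\omega)=m(\mb L\omega)=m(\langle r,\omega\rangle)$, i.e.\ $j_m=mr$ in the notation of Section~\ref{ss:lift}; by \eqref{e:rr} and Remark~\ref{r:hbar} this current is closed with rotation number $\bar h$. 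Consequently, as $j$ ranges over $\mc J_h(M)$ the difference $j-j_m=j-mr$ ranges exactly over the closed currents of rotation number $h-\bar h$, and writing $h'\coloneq h-\bar h$ the problem becomes
\[
	\inf_{j\in\mc J_h(M)}I(m,j)=\tfrac12\inf\big\{\|k\|_m^2\st k\in\mc J_{h'}(M)\big\}.
\]

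For the upper bound I would exhibit the minimizer. Let $\eta^{h'}\in\mc D^{1,m\mathrm{-harm}}$ be the $m$-harmonic form attached to $h'$ by the second isomorphism of Remark~\ref{r:isomorphism}, and set $k_0\coloneq m\,g^{-1}\eta^{h'}$. Using that $m$-harmonic forms are $L^2(m)$-orthogonal to exact forms (the decomposition \eqref{e:hodge}) one gets $k_0(df)=0$, so $k_0$ is closed; and for $\omega=df+\eta_c\in[c]$ one gets $k_0(\omega)=m(\langle g^{-1}\eta^{h'},\eta_c\rangle)=\langle h',c\rangle$, so $k_0\in\mc J_{h'}(M)$. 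Its squared norm is computed from \eqref{e:jscalar}: $\|k_0\|_m^2=\sjl k_0,k_0\sjd_m=k_0(\eta^{h'})=m(|\eta^{h'}|^2)=(h',h')$, the last step being the isometry in Remark~\ref{r:isomorphism}. Hence $j_m+k_0\in\mc J_h(M)$ realizes $I(m,j_m+k_0)=\tfrac12\|k_0\|_m^2=Q(h)$.

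For the matching lower bound I would use the variational formula \eqref{e:dualnorm} for the dual norm, tested against the single form $\eta^{h'}$. Since $\eta^{h'}$ is closed, in cohomology class $c'\coloneq[\eta^{h'}]$, and is by uniqueness the $m$-harmonic representative $\eta_{c'}$, any $k\in\mc J_{h'}(M)$ satisfies $k(\eta^{h'})=\langle h',c'\rangle=m(\langle g^{-1}\eta^{h'},\eta_{c'}\rangle)=(h',h')=\|\eta^{h'}\|_m^2$; plugging into \eqref{e:dualnorm} gives $\|k\|_m^2\ge 2k(\eta^{h'})-\|\eta^{h'}\|_m^2=(h',h')$, so $I(m,j)\ge Q(h)$ for all $j\in\mc J_h(M)$ (the case $I(m,j)=+\infty$ being trivial). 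Together with the previous paragraph this proves \eqref{e:qeq}.

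I do not anticipate a real obstacle: the argument is essentially the identification of $j_m$ with $mr$ plus one projection onto $m$-harmonic forms. The only points needing care are bookkeeping the several metric-dependent pairings ($\langle\cdot,\cdot\rangle$, $\sjl\cdot,\cdot\sjd_m$, the $L^2(m)$ products and the homology--cohomology duality $\langle\cdot,\cdot\rangle$) and, slightly delicate, verifying that $j_m$ is genuinely a \emph{closed} current so that $j-j_m$ has a well-defined rotation number — which is exactly what \eqref{e:rr} and Remark~\ref{r:hbar} supply.
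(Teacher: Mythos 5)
Your proposal is correct and follows essentially the same route as the paper: identify $j_m=mr$ (hence closed with rotation number $\bar h$), use the weighted Hodge decomposition \eqref{e:hodge} to reduce to the $m$-harmonic block, and invoke the isometry of Remark~\ref{r:isomorphism}. The only difference is cosmetic --- you exhibit the minimizer $j_m+m\,g^{-1}\eta^{h-\bar h}$ and give a separate lower bound by testing the dual norm against $\eta^{h-\bar h}$, while the paper computes the supremum in $I(m,j)=\sup_\omega(j-j_m)(\omega)-\tfrac12 m(|\omega|^2)$ directly across the three Hodge components and then minimizes over $j$; the minimizer found is the same in both arguments.
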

\begin{proof}
	For $j \in \mc J_h(M)$, $(m,j)\in \HH$ and thus by \eqref{e:rate} and \eqref{e:jmu2} with $\varrho \equiv 1$
	\bel{e:mj1}{
		I(m,j)=
		& \sup_{\omega \in \mc D^1} (j-j_m)(\omega)-\tfrac 12 m(|\omega|^2)
		\\
		= & \sup_{\omega \in \mc D^1} j(\omega)-m(\langle r,\omega\rangle)-\tfrac{1}{2}m(|\omega|^2)
		\\
		= & \sup_{\omega \in \mc D^{1,m\mathrm{-harm}}
			\oplus (\mc D^{1,\mathrm{closed}})^\perp}
		j(\omega)-m(\langle r,\omega\rangle)-\tfrac{1}{2}m(|\omega|^2)
	}
	where the second equality follows from $m(d_m^\ast \omega)=0$, while the last equality follows from the orthogonality of the direct sum \eqref{e:hodge} in $L^2(m)$, and $j(df)=m(\langle r, df\rangle)=0$, so that the supremum is attained on $1$-forms $\omega$ with vanishing exact term in the decomposition. Recalling that $\eta_c$ is the unique $m$-harmonic form in cohomology class $c$ and that  $(\eta_c)_{c\in H^1(M;\bb R)} =\mc D^{1,m\mathrm{-harm}}$ we obtain
	\bel{e:mj2}{
		I(m,j) = &  \sup_{c \in H^{1}(M;\bb R)}
		j(\eta_c)-m(\langle r,\eta_c \rangle)-\tfrac{1}{2}m(|\eta_c|^2)
		\\  &
		+ \sup_{\xi \in  (\mc D^{1,\mathrm{closed}})^\perp} j(\xi)-m(\langle r,\xi \rangle)-\tfrac{1}{2}m(|\xi|^2)
	}
	Now, for $\bar h$ as in Remark~\ref{r:hbar}, and since $d^\ast_m \eta_c=0$ we actually have
	\bel{e:homologytypical}{
		\langle \bar h,c \rangle = 	m(\langle r,\eta_c \rangle)
		=m\big(\langle -\tfrac12 \nabla \varrho - \tfrac 12 \nabla V +r,\eta_c \rangle\big)= j_m(\eta_c)	}
	so $\bar h$ is nothing but the rotation number of the typical current in the invariant measure $j_m$.

	Moreover $m(|\eta_c|^2)=(c,c)$, see Remark~\ref{r:isomorphism}, $j(\eta_c)=\langle h,c\rangle$ for all $j\in \mc J_h(M)$. Thus we deduce from \eqref{e:mj2}
	\bel{e:qeq2}{
		\inf_{j \in \mc J_h(M)} I(m,j)  = &
		\sup_{c \in H^{1}(M;\bb R)}
		\langle h - \bar h,c \rangle-\tfrac{1}{2}(c,c)
		\\ &
		+\inf_{j \in \mc J_h(M)} \sup_{\xi \in  (\mc D^{1,\mathrm{closed}})^\perp} j(\xi)
		-m(\langle r,\xi \rangle)-\tfrac{1}{2}m(|\xi|^2)
	}
	Now, since the scalar products induced on $H^1(M;\bb R)$ and $H_1(M;\bb R)$ are in duality, the r.h.s.\ in the first line of \eqref{e:qeq2} is exactly $\tfrac 12 (h-\bar h,h-\bar h)$. On the other hand, the second line is nonnegative (as one can always take $\xi=0$ in the sup) and equals $0$ for any current whose restricition to $ (\mc D^{1,\mathrm{closed}})^\perp$ coincides with $m\,r$. \eqref{e:qeq} is thus proved, the infimum being achieved at the current $j$ given by $j(df+\eta_c+\xi)= \langle h,c\rangle + m(\langle r,\xi \rangle)$, where $df+\eta_c+\xi$ represents the decomposition of a generic $1$-form in $\mc D^1$ as in \eqref{e:hodge}.
\end{proof}

\begin{proof}[Proof of Proposition~\ref{p:bound}, formula \eqref{e:gaussineq}]
	The inequality \eqref{e:gaussineq} is an immediate consequence of Lemma~\ref{l:upboundgauss} since
	\bel{e:upboundgauss2}{
		G(h)= \inf_{(\mu,j) \in \HH_h} I(\mu,j) \le \inf_{j \in \mc J_h(M)} I(m,j)= Q(h)
	}
\end{proof}

\begin{proof}[Proof of Proposition~\ref{p:bound}, formula \eqref{e:eps}]
	We start with the proof of the $\le$ inequality in \eqref{e:eps}. Fix $h\in H_1(M;\bb R)$ and for $\omega^h$ as in Remark~\ref{r:isomorphism}, let $u \equiv u^h \in \mc D^0$ be the unique solution to
	\bel{e:fred2}{
		Lu - 2 \langle r,du \rangle = d^\ast_m \omega^h
	}
	with $m(u)=0$. The operator $L^\dagger u \coloneq Lu - 2 \langle r,du \rangle$ is the adjoint of $L$ in $L^2(m)$, so that the well-posedness of the equation \eqref{e:fred2} is a consequence of the Freedholm alternative and the fact that the r.h.s.\ integrates to $0$ w.r.t.\ $m$.

	For $\eps>0$ such that $\eps \|u\|_{C(M)} < 1$, $1+\eps u$ is a smooth probability density, since $m(u)=1$. Then define
	\bel{e:optimalmuj}{
		& \nu^{\eps,h}=(1+\eps u) m \,\in \mc P(M)
		\\
		& \imath^{\eps,h}(\omega)= j_{\nu^{\eps,h}}(\omega)+\eps\, m \left( \langle g^{-1} \omega^h,\omega \rangle \right)
	}
	By these definitions
	\bel{e:closed}{
		& \imath^{\eps,h}(df) =\nu^{\eps,h}(Lf)-\eps\, m(f d^\ast_m \omega^h)
		\\
		& \phantom{\imath^{\eps,h}(df) } = \eps \left(m( u Lf- d^\ast_m \omega^h) \right)=\eps \,m\left(  f (L^\dagger u - d^\ast_m \omega^h)\right)=0
		\\
		& \imath^{\eps,h}(\omega_c)= \nu^{\eps,h}(\mb L \omega_c)+\eps m(\langle g^{-1} \omega^h,\omega_c \rangle)= \langle \bar h,c\rangle +\eps \langle h,c\rangle
	}
	where in the first identity we used  \eqref{e:codfm}, \eqref{e:jmu2}, the invariance of $m$ and \eqref{e:fred2}; in the second identity we used $(\mb L \omega_c)(x)=m (\langle r,\omega_c\rangle)= \langle \bar h,c\rangle$ for every $x\in M$ and the definition of $\omega^h$ in Remark~\ref{r:isomorphism}.

	\eqref{e:closed} imply that $(\nu^{\eps,h},\imath^{\eps,h}) \in \HH_{\bar h +\eps g}$ and therefore by the very definition of $G$ \eqref{e:rate2} and \eqref{e:rate}
	\bel{e:upboundgauss3}{
		G(\bar h +\eps h) & \le I(\nu^{\eps,h},\imath^{\eps,h})= \frac{\eps^2}2 \int \!\! \frac{|\omega^h|^2}{(1+\eps u)^2}  d\nu^{\eps,h}
		= \frac{\eps^2}2 \int \!\! \frac{|\omega^h|^2}{(1+\eps u)}  dm
	}
	so that, taking the limit inside the integral by bounded convergence
	\bel{e:upboundgauss4}{
		\varlimsup_{\eps} \eps^{-2} G(\bar h +\eps h) \le \tfrac 12 m(|\omega^h|^2)=(h,h)_r
	}

	We next turn to the $\ge$ inequality in \eqref{e:eps}. 	Since $I(\cdot,\cdot)$ has compact sublevel sets, see Remark~\ref{r:good}, for each $h\in H_{1}(M;\bb R)$ and $\eps>0$, there exists a $(\mu^{\eps,h},j^{\eps,h}) \in \HH_{\bar h+\eps h}$ such that
	\bel{e:imuj2}{
	I(\mu^{\eps,h},j^{\eps,h}) = G(\bar h+\eps h) \le \eps^2 \eps^{-2} Q(\bar h+\eps h) = \frac{\eps^2}2 (h,h)
	}
	where in the last inequality we used \eqref{e:gaussineq}.
	In particular $(\mu^{\eps,h},j^{\eps,h})_{0<\eps<1}$ lies in a compact set and any limit point $(\mu,j)$ as $\eps \downarrow 0$ satisfies $I(\mu,j)\le \varliminf_{\eps} I(\mu^{\eps,h},j^{\eps,h})=0$ by \eqref{e:imuj2}. Since $(m,j_m)$ is the unique zero of $I$, it follows $\mu^{\eps,h} \to m$ as $\eps \downarrow 0$. Therefore, recalling that $\eta^h$ is defined in Remark~\ref{r:isomorphism}, and \eqref{e:dualnorm}, \eqref{e:rate}, we get for \emph{any} $c\in H^1(M;\bb R)$ and $f\in \mc D^0$
	\bel{e:gacont}{
	G(\bar h+\eps h) =
	&   I(\mu^{\eps,h},j^{\eps,h})
	=
	\sup_{\omega \in \mc D^1} (j-j_{\mu^{\eps,h}})(\omega)- \tfrac 12 \mu^{\eps,h}(|\omega|^2)
	\\
	\ge &
	(j-j_{\mu^{\eps,h}})(\eps(\eta_c+df))- \tfrac 12 \mu^{\eps,h}(|\eps(\eta_c+df)|^2)
	\\
	= & \langle \bar h +\eps h, \eps c \rangle - \eps j_{\mu^{\eps,h}}(\eta_c+df)
	- \tfrac{\eps^2}2 \mu^{\eps,h}(| \eta_{c}+df|^2)
	}
	where in the inequality we just chose $\omega=\eps (\eta_c+df)$, and we used $j\in \mc J_{\bar h +\eps h}$ in the last line.

	Now notice that $ j_{\mu^{\eps,h}}(\eta_c+df)= \mu^{\eps,h}(\langle r,\eta_c \rangle + Lf) $, $\langle \bar h, c \rangle = j_m(\eta_c)$ and $m(Lf)=m(\langle df, \eta_c\rangle )=0$ to get from \eqref{e:gacont}
	\bel{e:gacont2}{
		\eps^{-2} G(\bar h+\eps h) \ge &
		\langle h, c \rangle
		- \eps^{-1} \mu^{\eps,h}\big(\langle r,\eta_c \rangle +Lf\big)
		\\ &
		+\eps^{-1} m\big(\langle r,\eta_c \rangle +Lf\big)
		- \tfrac 12 \mu^{\eps,h} \big(|\eta_c+df|^2   \big)}
	We then choose $f\in \mc D^0$ as the unique solution to
	\bel{e:fred}{
		-Lf = \langle r,\eta_c \rangle - m( \langle r,\eta_c \rangle)
	}
	with $m(f)=0$, namely \eqref{e:xxi} with $\eta_c$ in place of $\eta$. With such a choice eof $f$ the terms in $\eps^{-1}$ in \eqref{e:gacont2} vanish in view of
	\bel{e:extraterm}{
		\mu^{\eps,h}\big(\big(\langle r,\eta_c \rangle +Lf\big)\big)
		=  m( \langle r,\eta_c \rangle)= m\big(\big(\langle r,\eta_c \rangle +Lf\big)\big)
	}
	Moreover, as Remarked after \eqref{e:xxi}, for $f$ as in \eqref{e:fred}, $\eta_c+df=\omega_c$. Thus by the very definition of $(\cdot,\cdot)_r$, see the discussion at the  beginning of Section~\ref{ss:gc},
	\bel{e:ccr}{
		m(| \eta_{c}+df|^2)=(c,c)_r
	}
	Finally recalling that $\mu^{\eps,h}$ converges weakly $m$ as proved after \eqref{e:imuj2}, since $|\omega_c|^2$ is smooth
	\bel{e:muepshconv}{
		\lim_{\eps} \mu^{\eps,h} \big(|\eta_c+df|^2   \big) = m\big(|\eta_c+df|^2   \big) =(c,c)_r
	}
	Thus passing to the limit in \eqref{e:gacont2}, and using \eqref{e:extraterm}-\eqref{e:muepshconv}
	\bel{e:gacont3}{
		\varliminf_{\eps} \eps^{-2} G(\bar h+\eps h) \ge &
		\langle h, c \rangle  - \tfrac{1}2 (c,c)_r
	}
	As we optimize over $c\in H^1(M;\bb R)$ we get the $\ge$ inequality in \eqref{e:eps}.
\end{proof}

\begin{proof}[Proof of Proposition~\ref{p:bound}, formula \eqref{e:gc}]
	Let $\tilde \HH \subset \HH$ be the set of pairs  $(\mu,j)$ with $\mu=\varrho \,m$, $j=\mu E$ for some smooth, strictly positive density $\varrho$ and smooth tangent vector field $E$. Then, with the notation introduced at the beginning of Section~\ref{s:tools}
	\bel{e:jmuone}{
		j_{\mu}(\omega) =-\tfrac 12 m(\langle \nabla\varrho,\omega \rangle)+ \mu (\langle r,\omega\rangle)
		= -\tfrac 12 \mu(\langle \nabla \log \varrho,\omega \rangle)+ \mu (\langle r,\omega\rangle)
	}
	that is $j_{\mu}=  \mu \,g^{-1}\bar \omega_\varrho$ for $\bar \omega_\varrho = -\tfrac 12 d \log \varrho + gr \in \mc D^1$. Thus (see also \eqref{e:jmuscalar})
	\bel{e:gc1}{
	I(\mu,j)-I(\mu,-j) &
	= \frac 12 \|j-j_\mu\|_{\mu}^2 - \frac 12 \|-j-j_\mu\|_{\mu}^2=-2 \sjl j, j_\mu\sjd_\mu
	\\
	& =- 2 \,j(\bar \omega_\varrho) =  j (d \log \varrho)- 2 j(g\,r)= - 2 j(g\,r)
	}
	where in the last line we used \eqref{e:jscalar} and the fact that $j$ is closed.

	It is easy to check that $\tilde \HH$ is $I$-dense in $\HH$, namely that for $(\mu,j) \in \HH$ there exists a sequence $(\mu_n,j_n) \to (\mu,j)$ with $(\mu_n,j_n)\in \tilde \HH$ and $\lim_n I(\mu_n,\pm j_n)=I(\mu,\pm j)$. Indeed, $I$ is nothing but the lower-semicontinuous envelope of its restriction to $\tilde \HH$. Therefore \eqref{e:gc1} holds on $\HH$ and not just on $\tilde{\HH}$.

	We need to show that, in the quasi-reversible case see Definition~\ref{d:quasireversible}-(b), it holds ($\HH_h$ is defined in \eqref{e:hh})
	\bel{e:gcrestated}{
		\inf_{(\mu,j)\in \HH_h} I(\mu,j)= 	   \inf_{(\mu,j)\in \HH_{-h}} I(\mu,j) - \langle h,\bar c\rangle
	}
	Notice that $(\mu,j) \in \HH_h$ iff $(\mu,-j) \in \HH_{-h}$. Therefore
	\bel{e:gc2}{
		G(-h)
		&
		= \inf_{(\mu,j) \in \HH_{-h}} I(\mu,j)
		= \inf_{(\mu,j) \in \HH_{h}} I(\mu,-j)
		\\
		&  = \inf_{(\mu,j)  \in \HH_{h}} I(\mu,-j)  -2j(gr )+ 2j(gr )
		=\inf_{(\mu,j)  \in \HH_{h}} I(\mu,j)+ 2j(gr )
	}
	where in the last line we used \eqref{e:gc1}. By hypotheses of quasi-reversibility, the vector field $b$ is such that $gb$ is closed, thus $gb \in [\bar c]$ for some cohomology class $\bar c$. However, since $r=b+\tfrac 12 \nabla V$, it holds $gr \in [\bar c]$ as well. Therefore, for each $(\mu,j) \in \HH_h$, the quantity $2 j(gr)$ in the last line of \eqref{e:gc2} equals $2\langle h,\bar c \rangle$ and so it gets out of the inf to get \eqref{e:gc} (which trivially holds for $Q$).
\end{proof}

\section{Asymptotically Gaussian homology}
\label{s:proofag}
In this section we prove Theorem~\ref{t:asympnr}. Recall that $L$ has asymptotically Gaussian homology if $G(h)=Q(h)$, see \eqref{e:gaussineq}.
\begin{lemma}
	\label{l:costantl}
	Assume that $L$ has asymptotically Gaussian homology. Then $L$ is homologically reversible, and moreover $m$-harmonic forms have constant length. That is $|\eta_c|$ is constant (independent of $x$) for all $c\in H^1(M;\bb R)$.
\end{lemma}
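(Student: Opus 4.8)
The proof has two parts. \textbf{Homological reversibility} is immediate from \eqref{e:eps}: since $G=Q$, for every $h\in H_1(M;\bb R)$
\[
\tfrac12(h,h)_r=\lim_{\eps\downarrow0}\eps^{-2}G(\bar h+\eps h)=\lim_{\eps\downarrow0}\eps^{-2}Q(\bar h+\eps h)=\tfrac12(h,h),
\]
so the two scalar products on $H_1(M;\bb R)$ agree, which is exactly homological reversibility, see Definition~\ref{d:quasireversible}-\ref{(c)}. (It will also drop out of the computation below, so this step is not logically needed.)

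For the \textbf{constant-length} statement the plan is to perturb the measure variable at a point where the bound \eqref{e:gaussineq} is saturated. Fix $h$. By Lemma~\ref{l:upboundgauss}, $Q(h)=\inf_{j\in\mc J_h(M)}I(m,j)$ is attained at the current $j^\ast$ with $j^\ast(df+\eta_c+\xi)=\langle h,c\rangle+m(\langle r,\xi\rangle)$ on the weighted Hodge decomposition \eqref{e:hodge}; comparing $j^\ast$ with $j_m=m\,g^{-1}(gr)$ on the three summands (using $d^\ast_m(gr)=0$ and the definition of $\bar h$, see Remark~\ref{r:hbar}) gives $j^\ast-j_m=m\,g^{-1}\eta^{h-\bar h}$, i.e. $j^\ast=m\,g^{-1}(gr+\zeta)$ with $\zeta\coloneq\eta^{h-\bar h}$ the $m$-harmonic form of Remark~\ref{r:isomorphism}. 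Since $G(h)=Q(h)$, the pair $(m,j^\ast)\in\HH_h$ minimizes $I$ over $\HH_h$. Now take $\psi\in\mc D^0$ with $m(\psi)=0$; for $|t|$ small, $\mu_t\coloneq(1+t\psi)m$ is a probability with finite Fisher information, hence $(\mu_t,j^\ast)\in\HH_h$ and $I(\mu_t,j^\ast)\ge G(h)=I(m,j^\ast)$, so the smooth function $t\mapsto I(\mu_t,j^\ast)$ has a minimum at $t=0$. Writing $j^\ast=\mu_t\,g^{-1}\!\bigl(\tfrac{gr+\zeta}{1+t\psi}\bigr)$ and using \eqref{e:jmuone} for $j_{\mu_t}$,
\[
I(\mu_t,j^\ast)=\tfrac12\int\frac{\bigl|\zeta+t\bigl(\tfrac12 d\psi-\psi\,gr\bigr)\bigr|^2}{1+t\psi}\,dm .
\]
Differentiating at $t=0$, cancelling the $d\psi$-term via $\int\langle g^{-1}\zeta,d\psi\rangle\,dm=-\int\psi\,d^\ast_m\zeta\,dm=0$ (here $d^\ast_m\zeta=0$ is crucial) and using $\langle g^{-1}\zeta,gr\rangle=\langle r,\zeta\rangle$, the stationarity condition becomes $\int\psi\bigl(|\zeta|^2+2\langle r,\zeta\rangle\bigr)\,dm=0$ for all mean-zero $\psi$, hence
\[
|\zeta|^2+2\langle r,\zeta\rangle=\mathrm{const}.
\]

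To conclude: running the argument with $h$ and with $2\bar h-h$ replaces $\zeta$ by $-\zeta$; adding the two identities yields $|\zeta|^2=\mathrm{const}$ (subtracting gives back $\langle r,\zeta\rangle=\mathrm{const}$, i.e. homological reversibility). Since $h\mapsto\eta^{h-\bar h}$ is onto $\mc D^{1,m\mathrm{-harm}}$ by Remark~\ref{r:isomorphism}, every $m$-harmonic $1$-form — in particular each $\eta_c$, $c\in H^1(M;\bb R)$ — has constant length. The step I expect to be most delicate is the bookkeeping that pins down the minimizer in the explicit form $j^\ast=m\,g^{-1}(gr+\eta^{h-\bar h})$ and makes the exact term disappear after differentiation, both resting on $d^\ast_m\eta^{h-\bar h}=0$; verifying that $\mu_t$ is an admissible competitor and differentiating under the integral are routine on the compact $M$.
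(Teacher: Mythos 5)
Your proof is correct and follows essentially the same strategy as the paper: saturate the upper bound $G=Q$, pick out the extremizing pair, perturb the measure variable, and read off the Euler--Lagrange condition $|\zeta|^2+2\langle r,\zeta\rangle=\mathrm{const}$, then separate the quadratic and linear parts. There are two minor organizational differences worth noting. First, the paper runs a \emph{two}-parameter variation $\mu=m(1+\eps u)$, $j=j^h+\eps' m g^{-1}\zeta$, and uses the $\eps'$-condition to deduce that the extremal form $\xi^h$ is closed, hence $m$-harmonic; you instead identify the extremal current $j^\ast=m\,g^{-1}(gr+\eta^{h-\bar h})$ directly from the explicit minimizer produced in the proof of Lemma~\ref{l:upboundgauss}, so a one-parameter variation in $\mu$ suffices — a modest but genuine streamlining. (Incidentally your $\eta^{h-\bar h}$ is the correct identification; the paper's writing of $\eta^h$ at that point is a harmless indexing slip since $h$ ranges over all of $H_1(M;\bb R)$.) Second, you obtain homological reversibility independently from \eqref{e:eps} before the variational computation, whereas the paper extracts it only from the polarization of the Euler--Lagrange identity; as you note yourself, this shortcut is not logically required, but it does show that homological reversibility is already a consequence of matching the $\eps^2$-expansions of $G$ and $Q$, without using the global equality.
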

\begin{proof}
	Recall the definition \eqref{e:hh} of $\HH_h$. From Lemma~\ref{l:upboundgauss}, $G=Q$ iff for all $h\in H_1(M;\bb R)$, there exists $j^h \in \mc J_h$ such that $I(m,j^h) \le I(\mu,j)$
	for all $(\mu,j) \in \HH_h$. Indeed, the minimizer of the coercive functional $Q(\cdot)=I(m;\cdot)$ over the closed set $\mc J^h$ exists, so that equality holds iff
	\bel{e:gingi}{
		G(h)= \inf_{(\mu,j)\in \HH_h} I(\mu,j)=I(m,j^h)=Q(h)
	}
	In such a case, since $j_m=m\,r$, it holds necessarily that $j^h=m(r+g^{-1}\xi^h)$ for some $\xi^h \in \mc D^1$, $\xi^h \in L^2(m)$. Moreover, $\xi^h$ has to be orthogonal to exact forms in $L^2(m)$ since $j^h(df)=m(\langle r,df \rangle)=0$`'.
	Now take in \eqref{e:gingi} $\mu= m (1+u)$  for $u\in \mc D^0$ with $u\ge -1$ and $\int u \,dm=0$, and $j$ of the form $j^h+m g^{-1}\zeta$ for some $\zeta \in \mc D^1$ with $m g^{-1}\zeta \in \mc J_0$. To get from \eqref{e:rate}-\eqref{e:jmu2}, for all $u$'s and $\zeta$'s as just described
	\bel{e:lagrange}{
		I(m,j^h) \le I(m (1+u),j^h+ m g^{-1} \zeta )
		= \frac 12 \int  \frac{ |\xi^h+\zeta - \tfrac{1}{2} d u - u \,r|^2}{1+u}   \,dm
	}
	Now, changing $u$ to $\eps u$, $\zeta$ to $\eps' \zeta$, since we chose $u$ and $\zeta$ smooth, it is easily seen that the r.h.s.\ of \eqref{e:lagrange} is differentiable in $\eps,\eps'$, and imposing that the derivatives must vanish at $\eps=\eps'=0$ one gathers
	\bel{e:lagrange2}{
		& m(\langle \xi^h,\zeta\rangle)=0
		\\
		& m\left( -|\xi^h|^2 u + \langle \nabla u +2 u r,\xi^h \rangle \right)=0
	}
	The first equation holds for all smooth $\zeta$ with $m g^{-1}\zeta \in \mc J_0$, so that this equation implies that $\xi^h$ is closed. And since $\xi^h$ is orthogonal in $L^2(m)$ to exact forms, it must hold $\xi^h=\eta^h$, for $\eta^h$ as in Remark~\ref{r:isomorphism}. In particular the term  $m\left( \langle \nabla u ,\eta^h \rangle \right)$ vanishes and we get from the second equation, recalling that $\int u\, dm=0$
	\bel{e:lagrange3}{
		|\eta^h|^2+ 2 \langle r,\eta^h \rangle =\mathrm{constant} \qquad \text{for all $h\in H_1(M;\bb R)$}
	}
	By polarization, then one easily gets that the quadratic term and the linear one in $\eta^h$ must be independently costant. As $h$ spans $H^1(M;\bb R)$, $\eta^h$ spans $\mc D^{1,m\mathrm{-harm}}$ so that we get that $\langle r,\eta\rangle$ is constant (thus $L$ is homologically reversible) and $|\eta|$ are constant for all $m$-harmonic $\eta$'s.
\end{proof}

\begin{lemma}
	\label{l:constantminimal}
	Let $(M,g,m)$ be a weighted Riemannian manifold as in Section~\ref{ss:fib}. If every $m$-harmonic form on $M$ has constant length, then the weighted Albanese map $a_m$ defined in Section~\ref{ss:albanese}, is a Riemannian submersion with $m$-minimal fibers, according to Definition~\ref{d:minimality}.
\end{lemma}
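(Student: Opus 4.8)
The plan is to show that constant-length $m$-harmonic forms force the weighted Albanese map $a_m$ to be a Riemannian submersion, after which Proposition~\ref{p:m_harmonicity_equiv_m_minimality} and Remark~\ref{r:Albanese_map_is_harmonic} immediately give $m$-minimality of the fibers. Recall that $a_m \colon M \to \bb T^{b_1}$ was constructed so that, working on the universal cover, $\tilde a_m(\tilde y)(\eta_c) = u_{\eta_c}(\tilde y) - u_{\eta_c}(\tilde x_0)$ with $d u_{\eta_c} = p^\ast \eta_c$. Fixing a basis $c_1,\dots,c_{b_1}$ of $H^1(M;\bb R)$, the components of $a_m$ have differentials (pulled back) equal to the $m$-harmonic forms $\eta_{c_1},\dots,\eta_{c_{b_1}}$; hence the differential $(da_m)_x$ is surjective at every $x$ (so $a_m$ is a submersion) precisely because $m$-harmonic forms cannot vanish on an open set, and one even has surjectivity at every point by unique continuation / the fact that $\mc D^{1,m\mathrm{-harm}}$ has dimension $b_1$ and the $\eta_{c_i}$ are pointwise linearly independent off a measure-zero set — this last point needs the constant-length hypothesis to upgrade to \emph{every} point.

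The heart of the argument is the following linear-algebra fact on each cotangent space $T_x^\ast M$: if $V_1,\dots,V_{b_1}$ are vectors in a Euclidean space such that every linear combination $\sum a_i V_i$ has squared norm $\sum_{i,j} a_i a_j \langle V_i, V_j\rangle$ \emph{independent of the base point} $x$, then in particular the Gram matrix $\big(\langle V_i, V_j\rangle(x)\big)_{i,j}$ is constant in $x$ (this is the polarization step, already used at the end of the proof of Lemma~\ref{l:costantl}). So choose a basis of $H^1(M;\bb R)$ that is orthonormal for the scalar product $(c,c') = m(\langle g^{-1}\eta_c,\eta_{c'}\rangle)$ introduced in Section~\ref{ss:gc}; integrating the constant Gram matrix against $m$ shows the constants are exactly $\delta_{ij}$, i.e. $\langle g^{-1}\eta_{c_i},\eta_{c_j}\rangle(x) = \delta_{ij}$ for all $x$. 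This says precisely that the map sending $\sum a_i \partial_i$ (the standard frame on $\bb T^{b_1}$, equipped with the flat metric dual to the chosen basis) to the metric-dual of $\sum a_i \eta_{c_i}$ is a pointwise isometry from $T\bb T^{b_1}_{a_m(x)}$ onto the $g$-orthogonal complement of $\ker (da_m)_x$ inside $T_x M$. Since $(da_m)_x$ restricted to that orthogonal complement is the inverse of this map, $a_m$ is a Riemannian submersion for this flat metric on $\bb T^{b_1}$.

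With $a_m$ established as a Riemannian submersion, Remark~\ref{r:Albanese_map_is_harmonic} gives that $a_m$ is $m$-harmonic, and Proposition~\ref{p:m_harmonicity_equiv_m_minimality} then yields that every fiber $a_m^{-1}(y)$ is an $m$-minimal submanifold of $M$, which is the assertion of the Lemma. \textbf{The main obstacle} is the passage from "pointwise a.e." to "everywhere": a priori the $\eta_{c_i}$ could become linearly dependent at some exceptional $x$, so that $a_m$ fails to be a submersion there; the constant-length (equivalently, constant Gram matrix) hypothesis is exactly what rules this out, since a constant Gram matrix that is nondegenerate at one point — and it is nondegenerate after integrating, as the $\eta_{c_i}$ span $\mc D^{1,m\mathrm{-harm}}$ — is nondegenerate everywhere. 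One should also take minor care that the flat metric on $\bb T^{b_1}$ used here (dual to the $(\cdot,\cdot)$-orthonormal basis) is the one for which $a_m$ is simultaneously a Riemannian submersion and harmonic in the sense of Remark~\ref{r:Albanese_map_is_harmonic}; since the harmonicity argument there was componentwise and metric-independent on the torus side, this causes no conflict.
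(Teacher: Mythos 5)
Your proposal is correct and follows essentially the same route as the paper: reduce to showing $a_m$ is a Riemannian submersion (via Remark~\ref{r:Albanese_map_is_harmonic} and Proposition~\ref{p:m_harmonicity_equiv_m_minimality}), polarize the constant-length hypothesis to get a constant pointwise Gram matrix for $m$-harmonic forms, pick an $L^2(m)$-orthonormal basis so that constancy forces pointwise orthonormality, and conclude that the coframe pulled back by $a_m$ is pointwise orthonormal. The extra remarks you add about surjectivity at every point and the careful identification of the flat metric on $\bb T^{b_1}$ are clarifications of the same argument rather than a different proof.
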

\begin{proof}
	$a_m$ is always $m$-harmonic, see Remark~\ref{r:Albanese_map_is_harmonic}. On the other hand, $m$-harmonic submersions have $m$-minimal fibers, see Proposition~\ref{p:m_harmonicity_equiv_m_minimality}. So it is enough to check that if every $m$-harmonic form has constant length, $a_m$ is a Riemmanian submersion.  If every $m$-harmonic form on $M$ has constant length, by polarization, all pointswise scalar products $\omega \cdot \omega' = \langle g^{-1}\omega,\omega'\rangle$ in $T^\ast_x M $ are constant in $x$ for $\omega,\omega'$ $m$-harmonic forms. Fix an orthonormal base of the $b_1$-dimensional space of $m$ harmonic equipped with the Hilbert norm $\|\omega\|_m= \sqrt(m(|\omega|^2))$. Since scalar products are constant, it follows that the base is \emph{pointwise} orthonormal on each $T_x^\ast M$, not just in $L^2(m)$. Since $a_m$ pulls back harmonic forms on $\bb T^1$ to $m$-harmonic forms on $M$, orthonormal coframes are pulled back to orthonormal coframes in $L^2(m)$, and thus pointwise orthonormal coframes. This is equivalent to $a_m$ being a Riemannian submersion.
\end{proof}

\begin{lemma}
	\label{l:2implies3}
	Let $\psi\colon M\to \bb T^d$ be of class $C^2$, and suppose that $\psi$ pushes forward the Riemannian metric on $M$ to a flat metric on the torus $\bb T^d$. Then the semimartingale $Y_t \coloneq \psi(X_t)$ has quadratic variation $[Y,Y]_t= S\,t$ for some constant (symmetric, positive definite) matrix $S$ and $Y_t-\int B(X_s)\,ds$ is a martingale, where $B(x) \in T_{\psi(x)}\bb T^d$ is characterized as follows. For $\mc O\ni x$ a small enough open ball in $M$, one can write on $\mc O$, using an orthonormal (w.r.t.\ the induced flat metric) frame on $\psi(\mc O)$: $B=(B^1,\ldots, B^d)$, $\psi=(\psi^1,\ldots,\psi^d)$. Then $B^k=L\psi^k$ where $L$ is the generator \eqref{e:generator}.
\end{lemma}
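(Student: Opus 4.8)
The plan is to apply Itô's (Dynkin's) formula to the $C^2$ map $\psi$ component-wise in flat charts on $\bb T^d$, read off the drift and the brackets, and then patch the local statements together. The only substance is the elementary identity—encoding that $\psi$ is a Riemannian submersion—that $\langle\nabla\psi^k,\nabla\psi^l\rangle_g$ is constant on $M$; everything else is bookkeeping.

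First I would set up the localization. Fix a finite atlas of $\bb T^d$ by flat coordinate balls, with coordinates $y=(y^1,\dots,y^d)$ parallel for the flat pushforward metric $\bar g := \psi_\ast g$ (so the transition maps between charts are affine with constant linear part, and $\bar g$ is represented by a constant positive-definite matrix $(\bar g_{ij})$; this is the precise sense in which the pushforward metric is ``flat''). Since $X$ has continuous paths and $M$ is compact, one builds stopping times $0=\tau_0\le\tau_1\le\cdots\uparrow\infty$ a.s.\ such that on each $[\tau_n,\tau_{n+1}]$ the path $X$ stays in a small ball $\mc O_n$ with $\psi(\mc O_n)$ contained in a single chart; on $\mc O_n$ write $\psi=(\psi^1,\dots,\psi^d)$ with $\psi^k\in C^2(\mc O_n;\bb R)$. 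Dynkin's formula applied to $\psi^k$ shows that $M^{n,k}_t := \psi^k(X_t)-\psi^k(X_{\tau_n})-\int_{\tau_n}^t L\psi^k(X_s)\,ds$ is a continuous local martingale on $[\tau_n,\tau_{n+1}]$; in particular the drift is $L\psi^k$, i.e.\ $B^k=L\psi^k$. Here one must record that $B:=L\psi$ is a well-defined section of $\psi^\ast T\bb T^d$: under an affine change $\tilde y=Ay+c$ of flat coordinates one has $L\tilde\psi^k=\sum_l A^k_l\,L\psi^l$, so $L\psi$ transforms as the components of a tangent vector and $B(x)\in T_{\psi(x)}\bb T^d$ is chart-independent (in a $\bar g$-orthonormal frame this is exactly the statement's normalization).

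Next I would compute the brackets. For $f,h\in C^2(M)$ the carré du champ of the generator $L$ (see \eqref{e:generator}) is $\Gamma(f,h):=L(fh)-fLh-hLf=\langle\nabla f,\nabla h\rangle_g$, the first-order $b$-contribution cancelling. Hence $\langle M^{n,k},M^{n,l}\rangle_t-\langle M^{n,k},M^{n,l}\rangle_{\tau_n}=\int_{\tau_n}^t\langle\nabla\psi^k,\nabla\psi^l\rangle_g(X_s)\,ds$. The key step is that the integrand is \emph{constant}: the hypothesis that $\psi$ pushes $g$ forward to $\bar g$ means in particular that $\psi$ is a Riemannian submersion onto $(\bb T^d,\bar g)$ (see Section~\ref{ss:fib}), and for $1$-forms $\alpha,\beta$ on $\bb T^d$ one has, pointwise, $\langle g^{-1}\psi^\ast\alpha,\psi^\ast\beta\rangle_g=\big(\langle\bar g^{-1}\alpha,\beta\rangle_{\bar g}\big)\circ\psi$; indeed $g^{-1}\psi^\ast\alpha$ is horizontal and $d\psi$ carries it to $\bar g^{-1}\alpha$ (since $d\psi$ restricts to an isometry of horizontal spaces), and one then pairs with $\psi^\ast\beta$. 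Taking $\alpha=dy^k$, $\beta=dy^l$ gives $\langle\nabla\psi^k,\nabla\psi^l\rangle_g=\bar g^{kl}\circ\psi$, and $\bar g^{kl}=:S^{kl}$ is a constant symmetric matrix since the $y$'s are parallel for the flat metric; it is positive definite because $S=\bar g^{-1}$ and $\bar g$ is a metric (equivalently, $d\psi$ is surjective). Thus $\langle M^{n,k},M^{n,l}\rangle_t=S^{kl}(t-\tau_n)+(\text{value at }\tau_n)$.

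Finally I would globalize. Lift $Y_t=\psi(X_t)$ to a continuous $\bb R^d$-valued semimartingale $\tilde Y$ along the universal cover $\bb R^d\to\bb T^d$; on each $[\tau_n,\tau_{n+1}]$, $\tilde Y$ agrees up to an additive constant with the chart representation $(\psi^1(X),\dots,\psi^d(X))$, so concatenating the pieces shows that $M_t:=\tilde Y_t-\tilde Y_0-\int_0^t B(X_s)\,ds$ is a continuous local martingale with $\langle M^k,M^l\rangle_t=S^{kl}t$; being a continuous local martingale with deterministic, linearly growing bracket it is in fact a true (even $L^2$) martingale. This is exactly the assertion that $Y_t-\int_0^t B(X_s)\,ds$ is a martingale with $B^k=L\psi^k$ and $[Y,Y]_t=S\,t$. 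I expect the only delicate points to be purely organizational: (i) the stopping-time decomposition and the affine nature of flat-chart transitions, which are what make $B$ globally defined and $S$ independent of the (fixed) global coordinate choice on $\bb T^d$, and (ii) recording that a flat metric on a torus is constant-coefficient in suitable global parallel coordinates, so that ``$[Y,Y]_t=S\,t$ with $S$ constant'' is well-posed. There is no genuine analytic obstacle beyond carrying out this patching cleanly.
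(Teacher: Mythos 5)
Your proposal is correct and takes essentially the same route as the paper: apply Dynkin's formula to the components $\psi^k$ in flat charts, identify the drift as $L\psi^k$, compute the bracket via the carr\'e du champ $\Gamma(\psi^k,\psi^l)=\langle\nabla\psi^k,\nabla\psi^l\rangle_g$, and use the Riemannian-submersion identity to see that this integrand is the (constant) pushforward cometric on $\bb T^d$. The one cosmetic difference is that by lifting to the universal cover $\bb R^d$ and patching with stopping times you avoid even introducing the coordinate correction term $-\tfrac12\sqrt{|S|}\,\partial_i\bigl(|S|^{-1/2}S^{ik}\bigr)$ that the paper writes down only to observe that it vanishes when $S$ is constant; your version is tighter, and nothing substantive is gained or lost.
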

\begin{proof}
	Let us compute in coordinates in $\mc O$ using the orthonormal frame as in the statement of the lemma. The statement on the quadratic variation is trivial, since $Y$ satisfies, using standard semimartingale notation, $d[Y^h,Y^k]_t= S^{h,k}(X_t)\,dt$, with $S^{h,k}(x)= g^{i,j}(x) \partial_i \psi^h(x) \partial_j \psi^k(x)$. This is constant in $x$, since it is nothing but the pushforwarded metric on $\bb T^d$, which is flat by hypotheses. On the other hand, the bounded variation term in the Doob decomposition can be carefully computed in coordinates to get that $Y_t-\int_0^t B(X_s)ds$ is a martingale for
	\bel{e:bigb}{
		B^k(x)= L\psi^k(x) - \tfrac 12  \sqrt{|S(x)|}\partial_i \left(  \tfrac{1}{\sqrt{|S(x)|}}  S^{i,k}(x) \right)
	}
	where $|S|$ denotes the determinant. As already noticed, $\psi$ being a submersion implies that $S(\cdot)$ is constant (actually the identity in our coordinates). Therefore the Riemannian correction term in the last formula (the last term involving derivatives of $S$) vanishes.
\end{proof}

We are finally ready to prove the main theorem.
\begin{proof}[Proof of Theorem~\ref{t:asympnr}]

	(1) $\Rightarrow$ (2). From Lemma~\ref{l:constantminimal}, the map $a_m$ that we defined in Section~\ref{ss:albanese}, is a Riemannian submersion with minimal fibers. A result by Hermann, see \cite[Theorem~9.3]{besse2007einstein}, states that Riemannian submersions whose total space is complete are locally trivial fiber bundles. Since $M$ is compact, thus geodesically complete, $(M,\bb T^{b_1},a_m)$ is a locally trivial fiber bundle. Fibers are then $m$-minimal still from Lemma~\ref{l:constantminimal}, while the homological reversibility of $L$ comes from Lemma~\ref{l:costantl}.

	\smallskip

	(2) $\Rightarrow$ (3). By hypotheses there exists a smooth $\phi\colon M\to \bb T^{b_1}$ and a flat metric on $\bb T^{b_1}$ such that $(M,\bb T^{b_1},\phi)$ is a locally trivial fiber bundle and $\phi$ is a submersion with $m$-minimal fibers, see Section~\ref{ss:fib}. In particular, by Proposition~\ref{p:m_harmonicity_equiv_m_minimality}, $\phi$ is $m$-harmonic\footnote{In the statement of Theorem~\ref{t:asympnr} we did not detail the smoothness assumptions on the projection map $\phi$. In the literature, it may be sometimes assumed smooth or just differentiable. However notice that the harmonicity of $\phi$ guarantees that this two conditions are actually equivalent.}, in the weighted Eells-Sampson sense, see Section~\ref{sss:tension}. Since $\bb T^{b_1}$ is flat (in particular Christoffel's symbols vanish), using a orthonormal frame as in Lemma~\ref{l:2implies3}, it is easily seen that the components $\phi^k$ are $m$-harmonic in the sense $\Delta_m \phi^k=0$. In particular from Lemma~\ref{l:2implies3} applied with $\psi=\phi$, we get that \emph{locally} $B^k(x)= L \phi^k= \frac 12 \Delta_m \phi^k + \langle r,d\phi^k\rangle = \langle r,d\phi^k\rangle$, for $k=1,\ldots,b_1$. Now locally $d\phi^k$ is an $m$-harmonic form, being the differential of a $m$-harmonic function. In particular, since $b$ is homologically reversible by hypotheses (2), $\langle r,d\phi^k\rangle\eqcolon \tilde{h}^k$ is constant in $x$ in any small enough open set, and thus everywhere on $M$, since $M$ is connected. In other words, still by Lemma~\ref{l:2implies3}, $Y_t- \tilde{h}t$ is a continuous martingale with quadratic variation coinciding with the quadratic variation of a standard (flat) Brownian motion. Namely the statement.

	\smallskip

	(3) $\Rightarrow$ (1).	First notice that the quadratic variation $\big[\phi(X),\phi(X)\big]_t$ equals $I_{b_1} t$ where $I_{b_1}$ is the identity (in orthonormal coordinates). This implies that $b_1\le \mathrm{dim}(M)$ and that $d\phi$ has maximal rank, that is $b_1$. Thus $\phi$ is a submersion (although not necessarily a Riemannian submersion).

	If $e$ is a smooth $1$-form on $\bb T^{b_1}$, then denoting $\phi_\ast$ the pullback on forms
	\bel{e:changeofvariable}{
		\int_0^t e(Y_s)\circ dY_s = \int_0^t (\phi_\ast e)(X_s)\circ dX_s
	}
	If $e$ is harmonic on the flat torus $\bb T^{b_1}$ and $Y$ satisfies \eqref{e:sdetorus}, it is easy to see that the l.h.s.\ of \eqref{e:changeofvariable} is Gaussian for every $t\ge 0$, since it coincides in law with $c\dot W_t + \langle c,\bar h\rangle$ where $c$ is the cohomology class of $e$ and $W$ a standard Brownian motion on $\bb R^{b_1}$. On the other hand, $\phi_\ast e$ is closed in $M$ since pullbacks commute with differentials. As we have already noticed that $\phi$ is a submersion, any cohomology classes $c\in H^1(M;\bb R)$ have a representative closed $1$-form  $\xi_c$ of the type $\phi^\ast e$ for $e$ harmonic on $\bb T^{b_1}$. Thus the random homology $h_T$ defined by $\langle h_T,c \rangle = J_T(\xi_c)$ is Gaussian with covariance $|c|^2 T$, and thus has Gaussian large deviations with rate $Q(\cdot)$, see \eqref{e:gaussineq}. Since the large deviations rate of $h_T$ does not depends on this choice of the isomorphism $c\mapsto \xi_c$, see Remark~\ref{r:ldh}, we conclude.
\end{proof}

\bibliographystyle{plain}
\bibliography{biblio}

\end{document}